\let\thm@indent\indent}{\let\thm@indent\noindent}%
  {}{}
\renewcommand\th@plain{\slshape}
\newtheoremstyle{plain}% <name>
  {-\topsep}%                 <space above>
  {}%                         <space below>
  {\slshape}%              <body font>
  {}%                         <indent amount>
  {\sffamily\bfseries}%                <theorem head font>
  {.}%                        <punctuation after theorem head>
  {.5em}%                     <space after theorem head>
  {}%                         <theorem head spec>
\theoremstyle{plain}
\newtheorem{theorem}{Theorem}
\newtheorem{lemma}[theorem]{Lemma}
\newtheorem*{claim*}{Claim}
\newtheoremstyle{definition}% <name>
  {-\topsep}%                 <space above>
  {}%                         <space below>
  {\normalfont}%              <body font>
  {}%                         <indent amount>
  {\sffamily\bfseries}%                <theorem head font>
  {.}%                        <punctuation after theorem head>
  {.5em}%                     <space after theorem head>
  {}%                         <theorem head spec>
\theoremstyle{definition}
\newtheorem{remark}[theorem]{Remark}
\crefname{section}{Section}{Sections}
\crefname{theorem}{Theorem}{Theorems}
\crefname{corollary}{Corollary}{Corollaries}
\crefname{lemma}{Lemma}{Lemmas}
\crefname{proposition}{Proposition}{Propositions}
\crefname{claim}{Claim}{Claims}
\crefname{definition}{Definition}{Definitions}
\crefname{notation}{Notation}{Notations}
\crefname{problem}{Problem}{Problems}
\crefname{question}{Question}{Questions}
\crefname{note}{Note}{Notes}
\crefname{remark}{Remark}{Remarks}
\crefname{example}{Example}{Examples}
\crefname{enumi}{}{}
\crefname{enumii}{}{}
\crefname{enumiii}{}{}
\newcommand{\restore@Environment}[1]{%
  \AtBeginDocument{%
    \csletcs{#1*}{#1}%
    \csletcs{end#1*}{end#1}%
  }%
}
\forcsvlist\restore@Environment{alignat,equation,gather,multline,flalign,align}
\setlist{leftmargin=20pt}
\setlist[enumerate]{label=\textup{(\roman*)}}
\newif\ifendnotes
\newif\ifshowkeys
\let\tmp\phi
\let\phi\varphi
\let\varphi\tmp
\let\tmp\epsilon
\let\epsilon\varepsilon
\let\varepsilon\tmp
\renewcommand{\subset}{\subseteq}
\renewcommand{\mod}[1]{(\mathrm{mod}\ #1)}
\renewcommand{\and}{\quad\text{and}\quad}
\NewDocumentCommand{\xsideset}{mmme{_^}}{%
\mathop{%
% half width of #3
\settowidth{\dimen0}{$\m@th\displaystyle#3$}%
\dimen0=.5\dimen0
% half width of #3 with subscripts or superscripts,
% half width of #3 removed
\settowidth{\dimen2}{$%
\m@th\displaystyle#3%
\IfValueT{#4}{_{#4}}%
\IfValueT{#5}{^{#5}}%
$}%
\dimen2=.5\dimen2
\advance\dimen2 -\dimen0
% prescripts
\sbox6{\scriptspace\z@$\displaystyle{\vphantom{#3}}#1$}
% postscripts
\sbox8{\scriptspace\z@$\displaystyle{\vphantom{#3}}#2$}
% typeset the thing
\ifdim\wd6>\dimen2 \kern\dimexpr\wd6-\dimen2\relax\fi
{%
\mathop{\llap{\copy6}{\displaystyle#3}\rlap{\copy8}}\limits%
\IfValueT{#4}{_{#4}}%
\IfValueT{#5}{^{#5}}%
}%
\ifdim\wd8>\dimen2 \kern\dimexpr\wd8-\dimen2\relax\fi
}%
}
\newcommand{\dsum}[1]{\xsideset{}{^{#1}}\sum}
\newcommand{\astsum}{\dsum{\smash{\ast}}}
\begin{document}

\title[Counting relatively prime pairs of palindromes]
{Counting relatively prime pairs of palindromes}
\author[H. Kobayashi]{Hirotaka Kobayashi}
\author[Y. Suzuki]{Yuta Suzuki}
\author[R. Umezawa]{Ryota Umezawa}
\keywords{Palindromes, relatively prime pairs.}
\subjclass{%
Primary:
11A63; %% Radix representation; digital problems 
Secondary:
11A05, %% Multiplicative structure; Euclidean algorithm; greatest common divisors
11N25, %% Distribution of integers with specified multiplicative constraints
11N69. %% Distribution of integers in special residue classes
}
\maketitle

%%%%%%%%%%%%%%%%%%%%%%%%%%%%%%%%%%%%%%%%
\begin{abstract}
For a given base $g\ge2$,
a positive integer is called a palindrome
if its base $g$ expansion reads the same backwards as forwards.
In this paper, we give an asymptotic formula
for the number of relatively prime pairs
of palindromes of a fixed odd length and of any base $g\ge2$,
which solves an open problem proposed by Banks and Shparlinski (2005).
\end{abstract}

%%%%%%%%%%%%%%%%%%%%%%%%%%%%%%%%%%%%%%%%
\section{Introduction}
\label{sec:intro}
Take an integer $g\ge2$.
For a positive integer $n$,
we write its base $g$ expansion as
\begin{equation}
\label{base_g_expansion}
n
=
n_{N-1}\cdots n_{0}{}_{(g)}
\coloneqq
\sum_{i=0}^{N-1}n_{i}g^{i}
\quad\text{with}\quad
N\in\mathbb{N},\ 
n_{0},\ldots,n_{N-1}\in\{0,\ldots,g-1\}
\ \text{and}\ 
n_{N-1}\neq0.
\end{equation}
When $n\in\mathbb{N}$
has the base $g$ expansion \cref{base_g_expansion},
we say the integer $n$ or the expansion \cref{base_g_expansion} is of length $N$.
In this paper, we are interested in the palindromes of base $g$,
the positive integers having the base $g$ expansion
which reads the same backwards as forwards.
More precisely, we say a positive integer $n$ is a palindrome of base $g$
if the base $g$ expansion \cref{base_g_expansion} of $n$ satisfies
\begin{equation}
\label{def:palindrome:eq}
n_{i}=n_{N-i-1}
\quad\text{for}\quad
i\in\{0,\ldots,N-1\}.
\end{equation}
Since we always use $g$ as the base,
we just say ``palindromes'' without specifying the base.

%%%%%%%%%%%%%%%%%%%%%%%%%%%%%%%%%%%%%%%%
Banks and Shparlinski~\cite[p.~101]{BanksShparlinski:PalindromePhi}
%Memo: ``any'' asymptotic formula?
pointed out that any asymptotic formula for the number of relatively prime pairs
of palindromes of a fixed length is not known.
They formulated this problem with palindromes of even length
but it is inappropriate since every palindrome of even length $2N$ with $N\in\mathbb{N}$
is divisible by $g+1\ge3$ as one can see that if a palindrome $n$ of length $2N$
has the base $g$ expansion \cref{base_g_expansion} with $N$ replaced by $2N$, then
\begin{align}
n
=
\sum_{i=0}^{N-1}
n_{i}g^{i}
+
\sum_{i=N}^{2N-1}
n_{i}g^{i}
&=
\sum_{i=0}^{N-1}
n_{i}g^{i}
+
\sum_{i=0}^{N-1}
n_{2N-i-1}g^{2N-i-1}\\
&=
\sum_{i=0}^{N-1}
n_{i}g^{i}(1+g^{2(N-i)-1})
\equiv0\ \mod{g+1}
\end{align}
since $g\equiv-1\ \mod{g+1}$.
We thus consider this problem with palindrome of odd length $2N+1$ with $N\in\mathbb{N}$,
i.e.\ we are interested in obtaining some asymptotic formula for the sum
\begin{equation}
\label{main_sum}
\sum_{\substack{
m,n\in\Pi(2N+1)\\
(m,n)=1
}}
1,
\end{equation}
where
\begin{equation}
\label{def:Pi_N}
\Pi(N)
\coloneqq
\{
n:\text{palindrome of length $N$}
\}.
\end{equation}
Note that $\#\Pi(N)\asymp g^{\frac{N}{2}}$ as seen in \cref{lem:Pi_count} below,
where the implicit constant depends only on $g$.

%%%%%%%%%%%%%%%%%%%%%%%%%%%%%%%%%%%%%%%%
For a given set $\mathscr{A}$ of integers, write
\begin{equation}
\label{def:A_aq}
\mathscr{A}(a,q)
\coloneqq
\{
n\in\mathscr{A}
\mid
n\equiv a\ \mod{q}
\},
\end{equation}
where if the set of integers is written with parameters as $\mathscr{A}(x,\ldots,y)$, we write instead
\begin{equation}
\label{def:A_xy_aq}
\mathscr{A}(x,\ldots,y;a,q)
\coloneqq
\{
n\in\mathscr{A}(x,\ldots,y)
\mid
n\equiv a\ \mod{q}
\}.
\end{equation}
At the time of the paper~\cite{BanksShparlinski:PalindromePhi},
the best admissible range of the asymptotic formula
\[
\#\Pi(N;a,q)
=
\frac{1}{q}\#\Pi(N)+(\text{error})
\quad\text{with}\quad
(q,g^{3}-g)=1
\]
was
\begin{equation}
\label{admissible_range:BHS}
q\le c\biggl(\frac{N}{\log N}\biggr)^{\frac{1}{2}}
\end{equation}
with some constant $c=c(g)>0$,
which is due to Banks, Hart and Sakata~\cite[Proposition~4.2]{BHS}.
This admissible range \cref{admissible_range:BHS}
was improved later by Col~\cite[Th\'eor\`eme~1]{Col:PalindromeAP} to
\begin{equation}
\label{admissible_range:Col}
q\le\exp\biggl(c\frac{N}{\log N}\biggr)
\end{equation}
with some constant $c=c(g)>0$.
Even with such a narrow admissible range,
Banks and Shparlinski succeeded in calculating the average of the Euler totient function
over palindromes of a fixed length by proving a weak analog of the Brun--Titchmarsh theorem
(see \cite[Theorem~7]{BanksShparlinski:PrimeDivisorPalindrome}
and \cite[Lemma~2]{BanksShparlinski:PalindromePhi})
for palindromes
\begin{equation}
\label{Banks_Shparlinski_BT}
\#\Pi(N;a,q)
\ll
\left\{
\begin{array}{>{\displaystyle}ll}
g^{\frac{N}{2}}q^{-\frac{1}{2}}+1&\text{if $a\not\equiv0\ \mod{q}$},\\[2mm]
g^{\frac{N}{2}}q^{-\frac{1}{2}}&\text{if $a\equiv0\ \mod{q}$},
\end{array}
\right.
\end{equation}
where the implicit constant depends only on $g$.
For the proof and an additional remark,
see \cref{lem:BS_Brun_Titchmarsh} below and its subsequent remark.
After the paper~\cite{BanksShparlinski:PalindromePhi},
there has been several progress on the distribution of palindromes
~\cite{Col:PalindromeAP,TuxanidyPanario}.
Most notably, the Bombieri--Vinogradov type result is now available for palindromes
with the level of distribution $\frac{1}{5}$, which is due to Tuxanidy and Panario~\cite{TuxanidyPanario}.
However, what Banks and Shparlinski pointed out as the difficulty in \cref{main_sum}
is the behavior of $\#\Pi(N;0,q)$ with large $q$,
i.e.\ the bound \cref{Banks_Shparlinski_BT} has only the $q$-exponent $-\frac{1}{2}$.
This produces the error of the size
\begin{equation}
\label{tail_bound}
\sum_{g^{\delta(2N+1)}<d<g^{2N+1}}
\mu(d)\#\Pi(2N+1;0,d)^{2}
\ll
\sum_{g^{\delta(2N+1)}<d<g^{2N+1}}
(g^{N}d^{-\frac{1}{2}})^{2}
\asymp_{\delta}
Ng^{2N}
\end{equation}
with some $\delta\in(0,1)$ in the usual argument using the M\"obius function,
which supersedes the expected main term.
Since palindromes are as sparse as squares,
this bad behavior in the distribution in arithmetic progressions
seems unavoidable.

%%%%%%%%%%%%%%%%%%%%%%%%%%%%%%%%%%%%%%%%
In this paper, we show that 
it is indeed enough to combine
the Bombieri--Vinogradov type result due to Tuxanidy and Panario
(or even the weaker result due to Col~\cite[Th\'{e}or\`{e}me~2]{Col:PalindromeAP})
and \cref{Banks_Shparlinski_BT}
to obtain the following answer to the problem of Banks and Shparlinski:
%%%%%%%%%%%%%%%%%%%%%%%%%%%%%%%%%%%%%%%%
\begin{theorem}
\label{thm:relatively_prime_palindrome}
For $N\ge1$, we have
\[
\sum_{\substack{
m,n\in\Pi(2N+1)\\
(m,n)=1
}}
1
=
\frac{\rho(g)}{\zeta(2)}
\biggl(
1-\frac{2}{g}\prod_{p\mid g}\biggl(\frac{p}{p+1}\biggr)
\biggr)
\#\Pi(2N+1)^{2}
\Bigl(1+O(g^{-c\sqrt{N}})\Bigr),
\]
where $c>0$ is some constant,
\begin{equation}
\label{thm:relatively_prime_palindrome:rho_def}
\rho(g)
\coloneqq
\left\{
\begin{array}{>{\displaystyle}cl}
\biggl(\frac{g}{g-1}\biggr)^{2}&\text{when $g$ is even},\\[4mm]
\frac{g+\frac{1}{3}}{g-1}&\text{when $g$ is odd}
\end{array}
\right.
\end{equation}
and the constant $c>0$ and the implicit constant depends only on $g$.
\end{theorem}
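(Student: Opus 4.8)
The plan is to expand the coprimality condition by M\"obius inversion and to read the result as a sieve on the pairs in $\Pi(2N+1)^{2}$. Writing $(m,n)=1$ as $\sum_{d\mid(m,n)}\mu(d)$ and interchanging the order of summation gives
\[
\sum_{\substack{m,n\in\Pi(2N+1)\\(m,n)=1}}1
=
\sum_{d\ge1}\mu(d)\,\#\Pi(2N+1;0,d)^{2},
\]
so everything is governed by the distribution of palindromes in the single residue class $0\bmod d$; write $P(d)\coloneqq\#\Pi(2N+1;0,d)$ for brevity. The local factor at a prime $p$ is the density $\beta_{p}$ of palindromes of length $2N+1$ divisible by $p$, each pair loses a factor $\beta_{p}^{2}$ at $p$, and the heuristic answer is $\#\Pi(2N+1)^{2}\prod_{p}(1-\beta_{p}^{2})$.

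First I would pin down the local densities. For odd $p\nmid g$ one has equidistribution and $\beta_{p}=1/p$ to leading order, as can be checked directly from $n\equiv\sum_{i}n_{i}g^{i}$ with the central digit the only unpaired one. The genuinely exceptional primes are those dividing $g$, where $p\mid n$ depends only on the leading digit $n_{0}\in\{1,\dots,g-1\}$, so that $\beta_{u}=\#\{1\le a\le g-1:u\mid a\}/(g-1)$ for every squarefree $u\mid\operatorname{rad}(g)$; crucially these are \emph{not} multiplicative, since all primes dividing $g$ constrain the same digit. When $g$ is odd there is one further exceptional prime $p=2$, where $2\mid n$ depends only on the central digit $n_{N}$ and $\beta_{2}=(g+1)/(2g)$. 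Assembling the product as $\#\Pi(2N+1)^{2}\bigl(\sum_{u\mid\operatorname{rad}(g)}\mu(u)\beta_{u}^{2}\bigr)\prod_{p\nmid g}(1-\beta_{p}^{2})$ and simplifying the finite $u$-sum via $\sum_{u\mid\operatorname{rad}(g)}\mu(u)=0$ should reproduce the constant $\frac{\rho(g)}{\zeta(2)}\bigl(1-\frac{2}{g}\prod_{p\mid g}\frac{p}{p+1}\bigr)$ exactly, the even/odd dichotomy in $\rho(g)$ arising precisely from the extra factor $(1-\beta_{2}^{2})$ present only for odd $g$.

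To make this rigorous I would truncate the M\"obius sum at a parameter $z$. For $d\le z$ I can use the pointwise asymptotic $P(d)=\beta_{d}\#\Pi(2N+1)+E_{d}$, valid well past $z$ by Col's admissible range \cref{admissible_range:Col} or, on average, by the Bombieri--Vinogradov type theorem of Tuxanidy and Panario; extracting the main term here is routine and the convergent tail $\sum_{d>z}(\beta_{d}\#\Pi(2N+1))^{2}\ll\#\Pi(2N+1)^{2}z^{-1}$ of the singular product is harmless. Balancing this truncation error $z^{-1}$ against the sieve remainder available only up to the level of distribution $D$ (a fixed power of $g^{N}$) forces the optimal choice $z\approx g^{c\sqrt N}$, which is the source of the $g^{-c\sqrt N}$ in the statement: the quadratic trade-off between $\log z$ and $\log D/\log z$ is exactly what produces the square root of $N$.

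The main obstacle is precisely the one flagged in the introduction, namely the contribution of large moduli $d$ — equivalently of pairs with a large common factor — to $\sum_{d>z}\mu(d)P(d)^{2}$. The pointwise Brun--Titchmarsh bound \cref{Banks_Shparlinski_BT} is hopeless on its own here: inserting $P(d)\ll g^{N}d^{-1/2}$ into the tail only reproduces the estimate $\asymp Ng^{2N}$ of \cref{tail_bound}, which swamps the main term by a whole factor of $N$, since $\sum_{d>z}d^{-1}$ is a logarithm rather than a convergent tail. The whole difficulty is therefore to recover the true $d^{-2}$ decay on average, and this is where the Bombieri--Vinogradov input must be used in an essential way: for $z<d\le D$ the cancellation it provides replaces the lossy $d^{-1/2}$ by the genuine density $\beta_{d}\asymp d^{-1}$, with \cref{Banks_Shparlinski_BT} retained only to dominate the second-moment remainder $\sum_{d\le D}E_{d}^{2}$ inside the admissible range. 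Verifying that the averaged saving is strong enough to push this remainder below $g^{2N}g^{-c\sqrt N}$, and that the moduli beyond $D$ are negligible because $P(d)$ is then forced to be small, is the heart of the argument and the step I expect to require the real work.
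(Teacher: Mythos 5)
Your global framework---M\"obius inversion, the local densities $\beta_{p}$ with the correct non-multiplicativity at the primes dividing $g$ and the special prime $2$ for odd $g$, and a Bombieri--Vinogradov input up to the level of distribution---is the paper's framework, and your heuristic product does reproduce the constant $\frac{\rho(g)}{\zeta(2)}\bigl(1-\frac{2}{g}\prod_{p\mid g}\frac{p}{p+1}\bigr)$. The genuine gap is exactly at the step you defer to ``the real work'': the moduli beyond the level of distribution $D$. Writing $P(d)=\#\Pi(2N+1;0,d)$ as you do, your only stated reason for discarding them, namely that ``$P(d)$ is then forced to be small'', is the pointwise argument that you yourself (and the paper, in \cref{tail_bound}) show to fail: since $D$ can be at most a fixed power of $g^{N}$ while $d$ runs up to $g^{2N+1}$, the bound $P(d)\ll g^{N}d^{-\frac{1}{2}}$ of \cref{lem:BS_Brun_Titchmarsh} gives
\[
\sum_{D<d<g^{2N+1}}P(d)^{2}
\ll
g^{2N}\sum_{D<d<g^{2N+1}}\frac{1}{d}
\asymp
Ng^{2N},
\]
which exceeds the main term by a factor of $N$ for \emph{every} admissible choice of $D$, and beyond $D$ there is, by definition of the level of distribution, no averaged information available to do better. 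This range is precisely the obstruction that made the problem open, and your proposal leaves it unresolved.

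The missing idea---the one genuinely new ingredient of the paper, displayed in \cref{tail_bound_new}---is to apply \cref{lem:BS_Brun_Titchmarsh} to \emph{one} factor only and then undo the M\"obius expansion in the remaining factor by swapping the order of summation, so that the divergent $d$-sum collapses into a divisor function:
\[
\sum_{U<d<g^{2N+1}}P(d)^{2}
\ll
g^{N}U^{-\frac{1}{2}}
\sum_{U<d<g^{2N+1}}
\sum_{\substack{n\in\Pi(2N+1)\\ d\mid n}}1
\le
g^{N}U^{-\frac{1}{2}}
\sum_{n\in\Pi(2N+1)}\tau(n)
\ll_{\epsilon}
g^{(2+\epsilon)N}U^{-\frac{1}{2}},
\]
which with $U=g^{\frac{N}{5}}$ and $\epsilon=\frac{1}{20}$ is $\ll g^{2N-\frac{N}{20}}$; no information about $P(d)$ for individual large $d$ is needed at all. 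Two lesser corrections to your sketch: the paper cuts at the fixed power $U=g^{\frac{N}{5}}$, not at $z\approx g^{c\sqrt{N}}$, and the $g^{-c\sqrt{N}}$ in the theorem is produced \emph{inside} \cref{lem:Tuxanidy_Panario_BV_variant}, by balancing the power saving $R^{-\frac{1}{4}-\sigma_{1}}$ against $e^{-\sigma_{\infty}N/\log R}$ over dyadic blocks (morally your trade-off, but located in the Bombieri--Vinogradov estimate rather than in the choice of truncation). Moreover, that estimate only covers moduli coprime to $g^{3}-g$, so one must factor each squarefree $d=mf$ with $f=(d,g^{3}-g)$, apply \cref{lem:Tuxanidy_Panario_BV_variant} in $m$, and count $\#\Pi(2N+1;0,f)$ for the finitely many $f\mid g^{3}-g$ explicitly as in \cref{lem:palindrome_AP_g3_g}; your density discussion anticipates this structure, but the coprimality restriction itself goes unaddressed in your plan.
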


%%%%%%%%%%%%%%%%%%%%%%%%%%%%%%%%%%%%%%%%
Without fixing the length,
the behavior of the number of relatively prime pairs of palindromes up to $x$
seems rather wild since it heavily depends on the size of $x$
with respect to the power of base $g$.
However, by considering only palindromes coprime to $g^{3}-g$,
we can obtain a clean asymptotic formula by using the same method as \cref{thm:relatively_prime_palindrome}.
Let us write
\begin{equation}
\label{def:P_ast}
\mathscr{P}^{\ast}(x)
\coloneqq
\{
n\in[1,x]\cap\mathbb{N}
\mid
\text{$n$ is palindrome and $(n,g^{3}-g)=1$}
\}.
\end{equation}
%%%%%%%%%%%%%%%%%%%%%%%%%%%%%%%%%%%%%%%%
\begin{theorem}
\label{thm:relatively_prime_palindrome:g3_g}
For $N\ge1$, we have
\[
\sum_{\substack{
m,n\in\mathscr{P}^{\ast}(x)\\
(m,n)=1
}}
1
=
\frac{1}{\zeta(2)}
\prod_{p\mid g^{3}-g}
\biggl(1-\frac{1}{p^{2}}\biggr)^{-1}
\#\mathscr{P}^{\ast}(x)^{2}
\Bigl(1+O(e^{-c\sqrt{\log x}})\Bigr)
\]
with some constant $c>0$,
where $c$ and the implicit constant depend only on $g$.
\end{theorem}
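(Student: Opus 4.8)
The plan is to follow the same Möbius-inversion strategy used for \cref{thm:relatively_prime_palindrome}, now with palindromes up to $x$ in place of palindromes of a fixed length. Writing $M\coloneqq\#\mathscr{P}^{\ast}(x)$ and detecting coprimality by $\sum_{d\mid(m,n)}\mu(d)$, interchanging the order of summation gives
\[
\sum_{\substack{m,n\in\mathscr{P}^{\ast}(x)\\(m,n)=1}}1
=
\sum_{d\ge1}\mu(d)\,\#\mathscr{P}^{\ast}(x;0,d)^{2}.
\]
Since every element of $\mathscr{P}^{\ast}(x)$ is coprime to $g^{3}-g$, only moduli with $(d,g^{3}-g)=1$ contribute, and $\#\mathscr{P}^{\ast}(x;0,d)=0$ once $d>x$. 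First I would isolate the expected main term by writing $\#\mathscr{P}^{\ast}(x;0,d)=\frac{M}{d}+E(x,d)$: the diagonal part is
\[
M^{2}\sum_{(d,g^{3}-g)=1}\frac{\mu(d)}{d^{2}}
=
\frac{1}{\zeta(2)}\prod_{p\mid g^{3}-g}\Bigl(1-\frac{1}{p^{2}}\Bigr)^{-1}M^{2},
\]
which is exactly the constant in the statement; completing the sum over all $d$ costs only $O(M^{2}/x)$.

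It then remains to bound the cross term $2M\sum_{d}\frac{\mu(d)}{d}E(x,d)$ and the square term $\sum_{d}\mu(d)E(x,d)^{2}$ by $O(M^{2}e^{-c\sqrt{\log x}})$. I would fix a threshold $D$ and, for $d\le D$, invoke the Bombieri--Vinogradov type theorem for palindromes of Tuxanidy and Panario~\cite{TuxanidyPanario} (for which \cite[Th\'eor\`eme~2]{Col:PalindromeAP} already suffices). The key point that makes this applicable to $\#\mathscr{P}^{\ast}(x;0,d)$, i.e.\ to the residue class $a=0$, is that for $(d,g^{3}-g)=1$ the palindromes coprime to $g^{3}-g$ equidistribute over every residue class modulo $d$, including $0$; this is in sharp contrast with primes, for which $a=0$ is degenerate. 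Summing the theorem over the $O(\log x)$ relevant lengths $2N+1$ gives $\sum_{d\le D}\lvert E(x,d)\rvert\ll M\,e^{-c\sqrt{\log x}}$, which controls the small-modulus parts of both error terms, the square term being handled through $\sum_{d\le D}E(x,d)^{2}\le\bigl(\max_{d\le D}\lvert E(x,d)\rvert\bigr)\sum_{d\le D}\lvert E(x,d)\rvert$ together with the trivial bound $\lvert E(x,d)\rvert\ll M$.

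The main obstacle is the tail $d>D$, which is exactly the phenomenon exhibited in \cref{tail_bound}. For the cross term the tail is harmless: the Brun--Titchmarsh bound \cref{Banks_Shparlinski_BT}, summed over lengths, yields $\#\mathscr{P}^{\ast}(x;0,d)\ll M d^{-1/2}$, so $M\sum_{d>D}\frac{\lvert E(x,d)\rvert}{d}\ll M^{2}\sum_{d>D}d^{-3/2}\ll M^{2}D^{-1/2}$, which is acceptable once $D\ge e^{c\sqrt{\log x}}$. The genuine difficulty is the tail of the square term, $\sum_{d>D}\mu(d)\#\mathscr{P}^{\ast}(x;0,d)^{2}$: inserting $\#\mathscr{P}^{\ast}(x;0,d)\ll Md^{-1/2}$ and taking absolute values gives only $\ll M^{2}\sum_{D<d\le x}d^{-1}\asymp M^{2}\log x$, which loses a full logarithm and would swamp the main term---precisely the loss in \cref{tail_bound}. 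Overcoming this requires exploiting the cancellation of $\mu(d)$ rather than bounding term by term, and is carried out exactly as in the proof of \cref{thm:relatively_prime_palindrome}; here it is crucial that the restriction $(n,g^{3}-g)=1$ confines $d$ to moduli coprime to $g^{3}-g$, away from the structurally non-equidistributed divisors such as $g+1$ (which divides every even-length palindrome) that are responsible for the factor $\rho(g)$ in \cref{thm:relatively_prime_palindrome} and are absent here. Assembling the pieces and recalling that $\sqrt{\log x}$ plays the role of $\sqrt{N}$ in \cref{thm:relatively_prime_palindrome} produces the stated asymptotic with error $O(e^{-c\sqrt{\log x}})$.
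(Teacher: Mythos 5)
Your global structure (M\"obius inversion, a threshold $D$, Bombieri--Vinogradov for $d\le D$, Brun--Titchmarsh for the tail) is the same as the paper's, and your treatment of the diagonal term, the small-modulus range, and the cross-term tail is sound. But there is a genuine gap at exactly the step you single out as the crux: the tail of the square term. You assert that overcoming the logarithmic loss ``requires exploiting the cancellation of $\mu(d)$'' and defer the execution to the proof of \cref{thm:relatively_prime_palindrome}. That is a mischaracterization of what that proof does, so the deferral does not fill the gap: no cancellation of $\mu(d)$ is used anywhere in the paper, and no technique is offered (or known) that would extract cancellation from $\sum_{d>D}\mu(d)\,\#\mathscr{P}^{\ast}(x;0,d)^{2}$. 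The paper's actual device---its central trick, displayed in \cref{tail_bound_new}---keeps absolute values throughout: apply the Brun--Titchmarsh bound $\#\mathscr{P}^{\ast}(x;0,d)\ll x^{1/2}d^{-1/2}$ (from \cref{lem:BS_Brun_Titchmarsh} summed over lengths) to \emph{one} factor only, then swap the order of summation so that the sum over $d$ becomes a divisor count:
\[
\sum_{D<d\le x}\#\mathscr{P}^{\ast}(x;0,d)^{2}
\ll
x^{\frac{1}{2}}D^{-\frac{1}{2}}
\sum_{D<d\le x}
\sum_{\substack{n\in\mathscr{P}^{\ast}(x)\\ d\mid n}}1
\le
x^{\frac{1}{2}}D^{-\frac{1}{2}}
\sum_{n\in\mathscr{P}^{\ast}(x)}\tau(n)
\ll_{\epsilon}
x^{1+\epsilon}D^{-\frac{1}{2}}.
\]
Without this swap, your argument is incomplete at its decisive step, since the term-by-term bound you correctly computed loses a factor $\log x$.

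A secondary but real consequence concerns the threshold. Once the tail is handled by the swap, $D$ must be a fixed power of $x$ (the paper takes $U=x^{1/10}$, comfortably inside the range $Q\le x^{1/5-\epsilon}$ of \cref{lem:TuxanidyPanario_BV}), because $D^{-1/2}$ has to absorb the divisor-function loss $x^{\epsilon}$ (indeed $\tau(n)$ can be as large as $\exp(c\log x/\log\log x)$). Your stated requirement $D\ge e^{c\sqrt{\log x}}$, which suffices for the cross term, would be hopelessly small for the square-term tail under any term-by-term treatment. Two minor remarks: summing over the $O(\log x)$ lengths is unnecessary for the small moduli, since \cref{lem:TuxanidyPanario_BV} is stated directly for $\#\mathscr{P}^{\ast}(y)$ with $\sup_{y\le x}$ and $\max_{a}$ (including $a=0$); and your observation that the moduli here avoid the non-equidistributed divisors of $g^{3}-g$ is correct but plays no role in the tail estimate---in \cref{thm:relatively_prime_palindrome} those divisors are present and the identical swap trick disposes of the tail all the same; they only affect the main-term constant.
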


%%%%%%%%%%%%%%%%%%%%%%%%%%%%%%%%%%%%%%%%
The idea of the proof of \cref{thm:relatively_prime_palindrome}
is just a small trick in the bound \cref{tail_bound}.
We use \cref{Banks_Shparlinski_BT} to only one factor $\#\Pi(2N+1;0,d)$
and swap the summation in the remaining double sum as
\begin{equation}
\label{tail_bound_new}
\begin{aligned}
\sum_{g^{\delta(2N+1)}<d<g^{2N+1}}
\mu(d)\#\Pi(2N+1;0,d)^{2}
&\ll
g^{(1-\delta) N}
\sum_{g^{\delta(2N+1)}<d<g^{2N+1}}
\sum_{\substack{
n\in\Pi(2N+1)\\
d\mid n
}}
1\\
&\ll
g^{(1-\delta)N}
\sum_{n\in\Pi(2N+1)}
\tau(n).
\end{aligned}
\end{equation}
This enables us to remove the contribution of the sum over $d$
by using the well-known bound $\tau(n)\ll n^{\epsilon}$
of the divisor function.
The proof of \cref{thm:relatively_prime_palindrome:g3_g}
is completely parallel to \cref{thm:relatively_prime_palindrome}.

%%%%%%%%%%%%%%%%%%%%%%%%%%%%%%%%%%%%%%%%
\section{Notation}
\label{sec:notation}
%General
Throughout the paper,
$y$, $Q$, $R$, $U$, $\delta$ denote positive real numbers,
$x$, $\alpha$ denotes a real number,
$d$, $f$, $i$, $m$, $n$, $q$, $r$, $M$, $N$ denote non-negative integers and
$a$, $h$, $k$, $\ell$ denote integers.
The letter $p$ is reserved for prime numbers.
The letter $\epsilon$ denotes a positive real number
smaller than some absolute constant.
The letter $c$ is used for positive constants
which can take different values line by line.

%Subsequence of a sequence
For a given set $\mathscr{A}$ of integers,
$\#\mathscr{A}$ denotes the cardinality of $\mathscr{A}$
and we use the notation $\mathscr{A}(a,q)$ and $\mathscr{A}(x,\ldots,y;a,q)$
given in \cref{def:A_aq} and \cref{def:A_xy_aq}.

%Exponential function
For a real number $x$, we let $e(x)\coloneqq\exp(2\pi ix)$.

%Decorated summation
For a positive integer $q$, the symbol
\[
\astsum_{a\ \mod{q}}
\]
stands for the sum over reduced residues $\mod{q}$.

%Notation specific to the paper
Throughout the paper, $g\ge2$ is an integer used as the base.
For $N\in\mathbb{N}$ and $k\in\mathbb{Z}$,
we use $\Pi(N)$ defined in \cref{def:Pi_N} and write
\begin{equation}
\label{def:Pi_k_g}
\Pi_{k,g}(N)
\coloneqq
\{
n\in\Pi(N)
\mid
n\equiv k\ \mod{g^{3}-g}
\}.
\end{equation}
For $N\in\mathbb{N}$ and $\alpha\in\mathbb{R}$, let us define
\begin{equation}
\label{def:S_N_alpha}
S(N;\alpha)
\coloneqq
\sum_{n\in\Pi(N)}e(\alpha n)
\end{equation}
and
\begin{equation}
\label{def:Phi_psi}
\Phi_{N}(\alpha)
\coloneqq
\prod_{1\le i<N}|\psi(\alpha(g^{i}+g^{2N-i}))|
\quad\text{with}\quad
\psi(\alpha)
\coloneqq
\sum_{0\le n<g}e(\alpha n).
\end{equation}
Also, we use $\mathscr{P}^{\ast}(x)$ defined by \cref{def:P_ast}.

%Arithmetic functions
The arithmetic functions $\tau(n),\mu(n)$ stand for
the divisor function (the number of divisors of $n$) and the M\"obius function, respectively.
The function $\zeta(s)$ stands for the Riemann zeta function.

%GCD
For integers $m$ and $n$,
we write $(m,n)$ for the greatest common divisor of $m$ and $n$,
which is easily distinguished from the pair $(m,n)$ from the context.

%indicator function
For a logical formula $P$,
we write $\mathbbm{1}_{P}$
for the indicator function of $P$.

%convention on implicit constants
If Theorem or Lemma is stated
with the phrase ``where the implicit constant depends on $a,b,c,\ldots$'',
then every implicit constant in the corresponding proof
may also depend on $a,b,c,\ldots$ even without special mentions.

%%%%%%%%%%%%%%%%%%%%%%%%%%%%%%%%%%%%%%%%
\section{Palindromes in arithmetic progressions}
\label{sec:Palindrome_AP}
We first count the palindromes of a fixed length:
%%%%%%%%%%%%%%%%%%%%%%%%%%%%%%%%%%%%%%%%
\begin{lemma}
\label{lem:Pi_count}
For $N\in\mathbb{N}$, we have
\[
\#\Pi(2N)
=
g^{N-1}(g-1)
\and
\#\Pi(2N+1)
=
g^{N}(g-1).
\]
\end{lemma}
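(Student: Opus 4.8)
The plan is to count palindromes of a fixed length directly by parametrizing them through the digits that can be freely chosen, since a palindrome is completely determined by its first half.

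\smallskip

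First I would treat the odd-length case $\Pi(2N+1)$. A palindrome of length $2N+1$ has digits $n_0,\ldots,n_{2N}$ satisfying $n_i=n_{2N-i}$ for all $i$, so it is determined by the digits $n_N,n_{N+1},\ldots,n_{2N}$, that is by the $N+1$ digits from the middle one up to the leading one. The leading digit $n_{2N}$ must be nonzero, so it ranges over $\{1,\ldots,g-1\}$, giving $g-1$ choices; the remaining $N$ digits $n_N,\ldots,n_{2N-1}$ are unrestricted, each ranging over $\{0,\ldots,g-1\}$, giving $g^N$ choices. Multiplying, $\#\Pi(2N+1)=g^N(g-1)$. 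For the even-length case $\Pi(2N)$ I would argue identically: a palindrome of length $2N$ is determined by its top half $n_N,\ldots,n_{2N-1}$, which is $N$ digits; the leading digit $n_{2N-1}$ contributes $g-1$ choices and the other $N-1$ digits each contribute $g$ choices, so $\#\Pi(2N)=g^{N-1}(g-1)$.

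\smallskip

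To make this rigorous I would set up an explicit bijection. For the odd case, I would map a tuple $(n_N,n_{N+1},\ldots,n_{2N})\in\{0,\ldots,g-1\}^N\times\{1,\ldots,g-1\}$ to the integer $n=\sum_{i=0}^{2N}n_ig^i$ where the lower digits are defined by the palindrome condition $n_i\coloneqq n_{2N-i}$ for $0\le i<N$. The constraint $n_{2N}\neq0$ guarantees the resulting expansion really has length $2N+1$, so this lands in $\Pi(2N+1)$, and the map is clearly injective because distinct digit tuples give distinct base $g$ expansions. Surjectivity follows from \cref{def:palindrome:eq}: any palindrome of length $2N+1$ arises this way. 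Counting the domain gives the stated formula. The even case is handled by the analogous bijection onto the top $N$ digits.

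\smallskip

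There is essentially no obstacle here; the only point requiring a little care is bookkeeping the range of each digit, in particular distinguishing the leading digit (which cannot be $0$, contributing the factor $g-1$) from the interior digits (which are free, contributing factors of $g$), and correctly identifying the number of freely chosen digits as $N+1$ in the odd case versus $N$ in the even case. Everything else is a direct count, so I expect the proof to be short.
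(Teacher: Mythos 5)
Your proof is correct and follows essentially the same approach as the paper: a palindrome is determined by half of its digits, with the leading digit contributing $g-1$ choices and the remaining free digits contributing a factor of $g$ each. The only cosmetic difference is that you parametrize by the upper half of the digits while the paper parametrizes by the lower half (writing $n$ explicitly as a sum like $\sum_{i=0}^{N-1}n_{i}(g^{i}+g^{2N-i})+n_{N}g^{N}$ with $n_{0}\neq0$), which is the same count after relabeling via the palindrome symmetry.
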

%%%%%%%%%%%%%%%%%%%%%%%%%%%%%%%%%%%%%%%%
\begin{proof}
Since $n\in\Pi(2N)$ can be written as
\[
n
=
\sum_{i=0}^{N-1}
n_{i}(g^{i}+g^{2N-i-1})
\quad\text{with}\quad
n_{0},\ldots,n_{N-1}\in\{0,\ldots,g-1\}
\ \text{and}\ 
n_{0}\neq0,
\]
we have
\begin{align}
\#\Pi(2N)
=
\biggl(\sum_{1\le n_{0}<g}1\biggr)
\prod_{i=1}^{N-1}
\biggl(\sum_{0\le n_{i}<g}1\biggr)
=
g^{N-1}(g-1).
\end{align}
This completes the proof for $\#\Pi(2N)$.
Similarly, since $n\in\Pi(2N+1)$ can be written as
\[
n
=
\sum_{i=0}^{N-1}
n_{i}(g^{i}+g^{2N-i})
+
n_{N}g^{N}
\quad\text{with}\quad
n_{0},\ldots,n_{N}\in\{0,\ldots,g-1\}
\ \text{and}\ 
n_{0}\neq0,
\]
we have
\begin{align}
\#\Pi(2N+1)
=
\biggl(\sum_{1\le n_{0}<g}1\biggr)
\prod_{i=1}^{N}
\biggl(\sum_{0\le n_{i}<g}1\biggr)
=
g^{N}(g-1).
\end{align}
This completes the proof for $\#\Pi(2N+1)$.
\end{proof}

%%%%%%%%%%%%%%%%%%%%%%%%%%%%%%%%%%%%%%%%
We then recall the weak Brun--Titchmarsh type result of Banks and Shparlinski:
%%%%%%%%%%%%%%%%%%%%%%%%%%%%%%%%%%%%%%%%
\begin{lemma}[Banks--Shparlinski~\cite{BanksShparlinski:PrimeDivisorPalindrome}]
\label{lem:BS_Brun_Titchmarsh}
For $N,q\in\mathbb{N}$ and $a\in\mathbb{Z}$, we have
\[
\#\Pi(N;a,q)
\ll
\left\{
\begin{array}{>{\displaystyle}ll}
g^{\frac{N}{2}}q^{-\frac{1}{2}}+1&\text{if $a\not\equiv0\ \mod{q}$},\\[2mm]
g^{\frac{N}{2}}q^{-\frac{1}{2}}&\text{if $a\equiv0\ \mod{q}$},
\end{array}
\right.
\]
where the implicit constant depends only on $g$.
\end{lemma}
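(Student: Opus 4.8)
The plan is to detect the congruence $n\equiv a\ \mod{q}$ by additive characters and to reduce the count to the exponential sums $S(N;\cdot)$ from \cref{def:S_N_alpha}. By orthogonality of the additive characters modulo $q$,
\[
\#\Pi(N;a,q)
=
\frac{1}{q}\sum_{h\ \mod{q}}
e\Bigl(-\frac{ha}{q}\Bigr)
S\Bigl(N;\frac{h}{q}\Bigr),
\]
so that the frequency $h\equiv0\ \mod{q}$ produces the expected main term $\frac1q\#\Pi(N)$, which is $\ll g^{N/2}/q$ by \cref{lem:Pi_count} and is therefore already below the claimed bound. Everything then rests on the off-diagonal frequencies $h\not\equiv0\ \mod{q}$.

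Next I would record the pointwise estimate $|S(N;\alpha)|\ll\Phi_{N}(\alpha)$, where $\Phi_N$ is the associated product of Dirichlet kernels from \cref{def:Phi_psi}. Writing a palindrome of length $N$ through its free digits makes the exponential sum factor into a product of the complete geometric sums $\psi$ attached to the digit weights $g^{i}+g^{2N-i}$; the restriction on the leading digit and the central digit contribute only boundary factors of size $\le g$, which are harmless. This reduces the whole problem to bounding
\[
\frac{1}{q}\sum_{h\not\equiv0\ \mod{q}}\Phi_{N}(h/q),
\]
that is, to the size of a product of Dirichlet kernels evaluated at the rationals $h/q$, whose arguments form the geometrically structured family $\frac{h}{q}(g^{i}+g^{2N-i})$.

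The heart of the matter, and the step I expect to be the main obstacle, is this last estimate, together with the fact that the saving is only $q^{-1/2}$ rather than the heuristic $q^{-1}$: the palindrome value map $(n_i)\mapsto\sum_i n_i(g^i+g^{2N-i})$ is very far from equidistributing modulo $q$, so no better saving is available. The plan is to split the digit string into two blocks, with the split point chosen according to the size of $q$ relative to $g^{N}$, and to apply the Cauchy--Schwarz inequality so that each block is treated by Parseval. Each block then produces a coincidence count
\[
\sum_{h\ \mod{q}}\prod_{i\in I}\Bigl|\psi\Bigl(\tfrac{h}{q}(g^i+g^{2N-i})\Bigr)\Bigr|^{2}
=
q\cdot\#\Bigl\{(\mathbf{u},\mathbf{v}):\textstyle\sum_{i\in I}(g^i+g^{2N-i})(u_i-v_i)\equiv0\ \mod{q}\Bigr\},
\]
namely the number of pairs of partial palindromes that agree modulo $q$. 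Bounding these coincidences is the real work: one uses that distinct digit blocks give distinct integer values (a carry/greedy argument on the weights $g^i+g^{2N-i}$), so that the only unavoidable coincidences come from the diagonal and from wraparound modulo $q$, and balancing the two blocks is what yields exactly the exponent $\frac12$.

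Finally, the dichotomy between the two cases is cleanly explained by the range of $q$. For $q>g^{N}$ every palindrome of length $N$ lies in $[g^{N-1},g^{N})\subset[0,q)$, so each residue class contains at most one of them; this gives the trivial bound $\le1$ when $a\not\equiv0\ \mod{q}$ and the bound $0$ when $a\equiv0\ \mod{q}$, since no positive integer below $q$ is divisible by $q$. For $q\le g^{N}$ the main estimate $g^{N/2}q^{-1/2}$ is itself $\ge1$ and so absorbs the additive constant. This accounts for the extra $+1$ precisely in the case $a\not\equiv0\ \mod{q}$.
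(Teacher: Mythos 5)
Your Fourier-analytic reduction is fine as far as it goes (the orthogonality identity, the bound $|S(2N+1;\alpha)|\ll\Phi_{N}(\alpha)$, and the treatment of the ranges $q>g^{N}$ and $q<g$ all match what is true), but the step you yourself flag as ``the real work''---bounding the coincidence counts---is precisely where the argument collapses, and the mechanism you sketch for it cannot deliver the stated bound. For a block $I\subset\{1,\ldots,N-1\}$ of digit positions, the ``distinct integer values plus wraparound'' argument gives
\[
C_{I}
\coloneqq
\#\Bigl\{(\mathbf{u},\mathbf{v})
:
\sum_{i\in I}(g^{i}+g^{2N-i})(u_{i}-v_{i})\equiv0\ \mod{q}
\Bigr\}
\le
g^{|I|}\Bigl(1+\frac{2L_{I}}{q}\Bigr),
\qquad
L_{I}\coloneqq(g-1)\sum_{i\in I}(g^{i}+g^{2N-i}),
\]
since each nonzero multiple of $q$ in $[-L_{I},L_{I}]$ has at most $g^{|I|}$ representations as a difference. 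This is correct but far too lossy here: every folded weight satisfies $g^{i}+g^{2N-i}\ge g^{N+1}$, so $L_{I}\ge g^{N+1}\ge q$ for \emph{every} block in the nontrivial range $q\le g^{N}$, and whichever block contains $i=1$ has $L_{I}\ge g^{2N-1}$. Consequently, after Cauchy--Schwarz the bound you obtain, namely $(C_{I_{1}}C_{I_{2}})^{1/2}\le g^{\frac{N-1}{2}}\prod_{j}(1+2L_{I_{j}}/q)^{1/2}$, is itself of size at least $g^{\frac{N-1}{2}}(L_{I_{1}}L_{I_{2}})^{1/2}/q\ge g^{2N-\frac{1}{2}}/q$, whereas the target for length $2N+1$ is $g^{N+\frac{1}{2}}q^{-\frac{1}{2}}$. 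The ratio is $g^{N-1}q^{-\frac{1}{2}}$, so your estimate is weaker than the target throughout the entire range $g\le q\le g^{N}$ where the lemma has content, and only becomes acceptable for $q\gg g^{2N-2}$, where the statement is trivial anyway.

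The failure is structural, not a matter of bookkeeping. For ordinary integers one makes such arguments work by choosing a block of \emph{low} digits whose total value range is below $q$, so that wraparound cannot occur and only the diagonal survives; the palindromic folding destroys this option, since every digit enters with weight $\approx g^{2N-i}\ge g^{N+1}$, wraparound is unavoidable for every block, and bounding the number of wraparound coincidences modulo $q$ is exactly the second moment of the block-value distribution modulo $q$---i.e.\ a statement of the same strength as the one being proved, so the plan is circular. The paper's proof (like the original Banks--Shparlinski argument) uses no exponential sums at all: for $g\le q<g^{N}$ it chooses $r\equiv N\ \mod{2}$ with $g^{r}\le q<g^{r+2}$, writes $n=m_{2}g^{\frac{N+r}{2}}+m_{1}g^{\frac{N-r}{2}}+m_{0}$, uses the mirror symmetry to determine $m_{2}$ from $m_{0}$, and then exploits $d=(q,g^{\frac{N-r}{2}})$: the congruence $n\equiv a\ \mod{q}$ pins $m_{0}$ to a class modulo $d$ (at most $g^{\frac{N-r}{2}}d^{-1}$ choices) and then pins $m_{1}$ to a class modulo $q/d$; since the middle block enters through the single power $g^{\frac{N-r}{2}}$ and runs over only $g^{r}\le q$ values, it meets that class at most $\approx d$ times. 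Multiplying gives $\ll g^{\frac{N-r}{2}}\ll g^{\frac{N}{2}}q^{-\frac{1}{2}}$. It is exactly this middle block---a genuinely base-$g$ structured piece of length controlled by $q$---that plays the role your Fourier blocks cannot, so to repair your proof you would in effect have to abandon the Parseval route and reproduce this combinatorial decomposition.
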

%%%%%%%%%%%%%%%%%%%%%%%%%%%%%%%%%%%%%%%%
\begin{proof}
For $q\ge g^{N}$, the assertion is obvious since then
there is no multiple of $q$ in $[1,g^{N})$
and at most one integer $\equiv a\ \mod{q}$ in $[1,g^{N})$.
Also, by \cref{lem:Pi_count},
the assertion is obvious if $q<g$.
We may thus assume $g\le q<g^{N}$ without loss of generality.

%%%%%
Take the largest $r\in\mathbb{Z}_{\ge0}$ such that
\[
g^{r}\le q
\and
r\equiv N\ \mod{2},
\]
which is possible since we are assuming $g^{0}<g^{1}\le q$.
Since we are assuming $q<g^{N}$, we then have
\[
r<N
\and
g^{r}\asymp q.
\]
By $r\equiv N\ \mod{2}$,
any $n\in\Pi(N;a,q)$ can be decomposed as
\[
n=
{\underbrace{n_{N-1}\cdots n_{\frac{N+r}{2}}}_{m_{2}}
\underbrace{n_{\frac{N+r}{2}-1}\cdots n_{\frac{N-r}{2}}}_{m_{1}}
\underbrace{n_{\frac{N-r}{2}-1}\cdots n_{0}}_{m_{0}}}{}_{(g)},
\]
or, more precisely, $n$ can be written uniquely as
\[
n
=
m_{2}g^{\frac{N+r}{2}}
+m_{1}g^{\frac{N-r}{2}}
+m_{0}
\]
with
\[
m_{0},m_{2}\in[1,g^{\frac{N-r}{2}})\cap\mathbb{Z},\quad
m_{1}\in[0,g^{r})\cap\mathbb{Z}
\and
m_{2}g^{\frac{N-r}{2}}+m_{0}:\text{palindrome}.
\]
Let $d\coloneqq(q,g^{\frac{N-r}{2}})$. We then have $d\mid g^{\frac{N-r}{2}}\mid g^{\frac{N+r}{2}}$ and so
\[
n\equiv a\ \mod{q}
\implies
m_{0}\equiv a\ \mod{d}
\]
and so there are $\le g^{\frac{N-r}{2}}d^{-1}$ possibilities for $m_{0}$
since $d\mid g^{\frac{N-r}{2}}$.
For a given $m_{0}$,
the value of $m_{2}$ is uniquely determined
since $m_{2}$ should be the digital reverse of $m_{0}$.
For such $m_{0},m_{2}$, we have
\[
n\equiv a\ \mod{q}
\implies
m_{1}
\equiv
\overline{g^{\frac{N-r}{2}}/d}\cdot\frac{a-m_{2}g^{\frac{N+r}{2}}-m_{0}}{d}\ \mod{q/d},
\]
where $\overline{x}\ \mod{q/d}$ denotes the multiplicative inverse of $x\ \mod{q/d}$.
Since $g^{r}\le q$, there are at most $d$ possibilities for the value of $m_{1}$.
In total, we get
\[
\#\Pi(N;a,q)
\ll
g^{\frac{N-r}{2}}d^{-1}\cdot d
=
g^{\frac{N}{2}}g^{-\frac{r}{2}}
\ll
g^{\frac{N}{2}}q^{-\frac{1}{2}}.
\]
This completes the proof.
\end{proof}

%%%%%%%%%%%%%%%%%%%%%%%%%%%%%%%%%%%%%%%%
\begin{remark}
\label{rem:BanksShparlinski_BS}
Note that it seems that the original bound
\begin{equation}
\label{BanksShparlinski_BS_incorrect}
\#\Pi(N;a,q)
\overset{\text{incorrect}}{\ll}
g^{\frac{N}{2}}q^{-\frac{1}{2}}
\end{equation}
stated as Lemma~2 in \cite[p.~96]{BanksShparlinski:PalindromePhi} is incorrect.
Indeed, with taking $a\in\Pi(N)$, e.g.\ $a=g^{N-1}+1$,
the original bound \cref{BanksShparlinski_BS_incorrect} implies
\[
1
\le
\#\Pi(N;a,q)
\ll
g^{\frac{N}{2}}q^{-\frac{1}{2}},
\]
which is false as $q\to\infty$.
This flaw is probably caused by ignorance of the case $q\ge g^{N}$.
\end{remark}

%%%%%%%%%%%%%%%%%%%%%%%%%%%%%%%%%%%%%%%%
We next recall the result of Tuxanidy and Panario
on the level of distribution of palindromes
with some minor modifications.
Recall the definitions of $S(N;\alpha),\Phi_{N}(\alpha),\psi(\alpha)$
given in \cref{def:S_N_alpha} and \cref{def:Phi_psi}.
%%%%%%%%%%%%%%%%%%%%%%%%%%%%%%%%%%%%%%%%
\begin{lemma}
\label{lem:S_to_Phi}
For $N\in\mathbb{N}$ and $\alpha\in\mathbb{R}$, we have
\[
S(2N+1;\alpha)
\ll
\Phi_{N}(\alpha),
\]
where the implicit constant depends only on $g$.
\end{lemma}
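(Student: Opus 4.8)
The plan is to exploit the explicit digit parametrization of palindromes of odd length. By the proof of \cref{lem:Pi_count}, every $n\in\Pi(2N+1)$ is written uniquely as
\[
n=\sum_{i=0}^{N-1}n_{i}(g^{i}+g^{2N-i})+n_{N}g^{N}
\]
with digits $n_{0},\ldots,n_{N}\in\{0,\ldots,g-1\}$ and $n_{0}\neq0$. The key observation is that $e(\alpha n)$ factors completely across these digits, since the exponent is a sum of terms each depending on a single $n_{i}$. Thus the sum defining $S(2N+1;\alpha)$ in \cref{def:S_N_alpha} splits as a product of independent one-digit sums.

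Carrying this out, I would write
\[
S(2N+1;\alpha)
=
\biggl(\sum_{n_{0}=1}^{g-1}e(\alpha n_{0}(1+g^{2N}))\biggr)
\prod_{i=1}^{N-1}\biggl(\sum_{n_{i}=0}^{g-1}e(\alpha n_{i}(g^{i}+g^{2N-i}))\biggr)
\biggl(\sum_{n_{N}=0}^{g-1}e(\alpha n_{N}g^{N})\biggr).
\]
Recognising $\sum_{0\le n<g}e(\beta n)=\psi(\beta)$, the middle factor is exactly $\prod_{1\le i<N}\psi(\alpha(g^{i}+g^{2N-i}))$, whose modulus is $\Phi_{N}(\alpha)$ by \cref{def:Phi_psi}. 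The remaining two factors correspond to the boundary indices $i=0$ and $i=N$, which are precisely the ones excluded from the product defining $\Phi_{N}$.

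It then remains only to bound these two boundary factors by a constant depending on $g$. The $i=N$ factor is $\psi(\alpha g^{N})$, with $|\psi|\le g$ trivially. The $i=0$ factor is a sum of $g-1$ unit-modulus terms, hence bounded by $g-1$; here the only mild subtlety is the leading-digit constraint $n_{0}\neq0$, which merely removes the $n_{0}=0$ term and so costs nothing beyond replacing $g$ by $g-1$ in the trivial bound. Taking absolute values throughout gives $|S(2N+1;\alpha)|\le g(g-1)\,\Phi_{N}(\alpha)$, which is the claimed estimate with implicit constant depending only on $g$. I expect no real obstacle here: the entire content is the clean factorization of the exponential sum over digits, and once that is in place the bound is immediate.
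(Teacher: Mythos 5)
Your proposal is correct and follows essentially the same route as the paper's proof: write each $n\in\Pi(2N+1)$ in its digit parametrization, factor $e(\alpha n)$ across the digits so that $S(2N+1;\alpha)$ splits into one-digit sums, identify the product over $1\le i<N$ with $\Phi_{N}(\alpha)$, and bound the two boundary factors (indices $i=0$ and $i=N$) trivially by a constant depending only on $g$. Nothing is missing.
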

%%%%%%%%%%%%%%%%%%%%%%%%%%%%%%%%%%%%%%%%
\begin{proof}
By writing $n\in\Pi(2N+1)$ as
\[
n
=
n_{0}(1+g^{2N})
+\sum_{i=1}^{N-1}n_{i}(g^{i}+g^{2N-i})
+n_{N}g^{N}
\quad\text{with}\quad
n_{0},\ldots,n_{N}\in\{0,\ldots,g-1\}
\ \text{and}\ 
n_{0}\neq0,
\]
we have
\begin{align}
&S(2N+1;\alpha)\\
&=
\sum_{1\le n_{0}<g}
\sum_{0\le n_{1},\ldots,n_{N-1}<g}
\sum_{0\le n_{N}<g}
e\biggl(
\alpha\biggl(
n_{0}(1+g^{2N})
+\sum_{i=1}^{N-1}n_{i}(g^{i}+g^{2N-i})
+n_{N}g^{N}
\biggr)\biggr)\\
&=
\biggl(\sum_{1\le n_{0}<g}e(\alpha n_{0}(1+g^{2N}))\biggr)
\biggl(\sum_{0\le n_{N}<g}e(\alpha n_{N}g^{N})\biggr)
\prod_{i=1}^{N-1}
\biggl(\sum_{0\le n_{i}<g}e(\alpha n_{i}(g^{i}+g^{2N-i}))\biggr).
\end{align}
By using the trivial bounds
\[
\biggl|\sum_{1\le n_{0}<g}e(\alpha n_{0}(1+g^{2N}))\biggr|,\quad
\biggl|\sum_{0\le n_{N}<g}e(\alpha n_{N}g^{N})\biggr|
\le
g,
\]
we obtain the lemma.
\end{proof}

%%%%%%%%%%%%%%%%%%%%%%%%%%%%%%%%%%%%%%%%
\begin{lemma}[{Tuxanidy--Panario~\cite{TuxanidyPanario}}]
\label{lem:TuxanidyPanario_hybrid}
For $N\in\mathbb{N}$, $k\in\mathbb{Z}$, $Q\ge1$, $\epsilon>0$ and $\delta\in[\frac{1}{3},\frac{2}{5}-\epsilon]$,
we have
\[
\sum_{\substack{
2\le q\le Q\\
(q,g^{3}-g)=1
}}
\astsum_{h\ \mod{q}}
\Phi_{N}\biggl(\frac{h}{q}+\frac{k}{g^{3}-g}\biggr)
\ll
(
Q^{2}g^{(1-\delta-\sigma_{1}(g,\epsilon))N}
+
Q^{1-\frac{\sigma_{1}(g,\epsilon)}{\delta}}
g^{N}
)
e^{-\frac{\sigma_{\infty}(g)N}{\log Q}}
\]
with some constants $\sigma_{1}(g,\epsilon),\sigma_{\infty}(g)>0$,
where the implicit constant depends only on $g$ and $\epsilon$.
\end{lemma}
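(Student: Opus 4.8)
The plan is to deduce this estimate from the core exponential-sum bound of Tuxanidy--Panario~\cite{TuxanidyPanario}, the only genuinely new inputs being the treatment of the additive shift $k/(g^{3}-g)$ and the explicit bookkeeping of the parameters $\delta,\sigma_{1},\sigma_{\infty}$. First I would clear denominators: since $(q,g^{3}-g)=1$,
\[
\frac{h}{q}+\frac{k}{g^{3}-g}=\frac{a}{q(g^{3}-g)},\qquad a\coloneqq h(g^{3}-g)+kq,
\]
and as $h$ runs through the reduced residues $\mod{q}$ the integer $a$ runs through those residues $\mod{q(g^{3}-g)}$ that are coprime to $q$ and lie in the single class $a\equiv kq\ \mod{g^{3}-g}$. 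Because $g^{3}-g$ is bounded in terms of $g$, this replaces $Q$ by $(g^{3}-g)Q$ and otherwise changes only constants (and at worst the value of $\sigma_{\infty}$), so the inner object is of the type the Tuxanidy--Panario machinery is built to bound.

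Next I would recall the engine behind that machinery. Each factor of $\Phi_{N}$ is the geometric sum $|\psi(\beta)|=|\sin(\pi g\beta)/\sin(\pi\beta)|$, which obeys the standard decay estimate $|\psi(\beta)|\le g\exp(-c\lVert\beta\rVert^{2})$, where $\lVert\,\cdot\,\rVert$ is the distance to the nearest integer and $c=c(g)>0$; hence
\[
\Phi_{N}(\alpha)\le g^{N-1}\exp\Bigl(-c\sum_{1\le i<N}\bigl\lVert\alpha(g^{i}+g^{2N-i})\bigr\rVert^{2}\Bigr).
\]
The task thus becomes to show that, on average over the admissible fractions $\alpha=a/(q(g^{3}-g))$, the exponent $\sum_{i}\lVert\alpha(g^{i}+g^{2N-i})\rVert^{2}$ is large. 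Writing $\alpha=h/q+k/(g^{3}-g)$ and separating the $q$-part from the shift, the shift contributes $\tfrac{k}{g^{3}-g}(g^{i}+g^{2N-i})$, a rational of bounded denominator; since $g^{3}\equiv g\ \mod{g^{3}-g}$, the residue $g^{i}+g^{2N-i}\ \mod{g^{3}-g}$ depends only on the parity of $i$, so splitting the index according to parity turns the shift into a fixed translation on each class, to which the unshifted analysis of $\sum_{i}\lVert\,\cdot\,\rVert^{2}$ applies essentially verbatim.

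Finally, the two terms in the bound emerge from the usual dichotomy, optimized at $Q\asymp g^{\delta N}$: bounding the contribution of typical moduli by the full count $\sum_{q\le Q}\phi(q)\ll Q^{2}$ of pairs against the averaged decay yields $Q^{2}g^{(1-\delta-\sigma_{1})N}$, whereas the sparse moduli and residues for which $g^{i}$ recurs too slowly $\mod{q}$ to force cancellation are handled separately and yield $Q^{1-\sigma_{1}/\delta}g^{N}$; both collapse to $g^{(1+\delta-\sigma_{1})N}$ at the critical level, and $\delta\in[\tfrac{1}{3},\tfrac{2}{5}-\epsilon]$ is precisely the window in which this is productive. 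The surviving factor $e^{-\sigma_{\infty}N/\log Q}$ is the uniform equidistribution saving, governed by the number of base-$g$ digits of the modulus rather than its size, and encoding the Diophantine fact that the $N-1$ resonance conditions $\lVert\alpha(g^{i}+g^{2N-i})\rVert\approx0$ cannot hold simultaneously across all scales. Establishing this last saving uniformly over all $q\le Q$, and checking that the bounded shift creates no new resonances so that it may indeed be absorbed as above, is the Fourier-analytic heart of \cite{TuxanidyPanario} and the step I expect to be the main obstacle; everything else is routine bookkeeping once the shift has been reduced to a fixed translation on each parity class of $i$.
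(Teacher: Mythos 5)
The paper does not actually prove this lemma: its entire proof is ``See Proposition~8.1 of \cite{TuxanidyPanario}'', i.e.\ the statement --- shift $k/(g^{3}-g)$, two-term bound, and constants $\sigma_{1}(g,\epsilon)$, $\sigma_{\infty}(g)$ included --- is imported as an external result. Your proposal instead tries to rederive it from an unspecified ``core exponential-sum bound'' of the same reference, and that is where the genuine gap lies. You never identify which estimate of \cite{TuxanidyPanario} you are reducing to, and every step that carries the actual content of the lemma --- the digit saving $\sigma_{1}$, the specific shape $Q^{2}g^{(1-\delta-\sigma_{1})N}+Q^{1-\sigma_{1}/\delta}g^{N}$, the restriction $\delta\in[\frac{1}{3},\frac{2}{5}-\epsilon]$, and the factor $e^{-\sigma_{\infty}N/\log Q}$ --- is only narrated (``emerge from the usual dichotomy'', ``is the Fourier-analytic heart \dots\ and the step I expect to be the main obstacle'') rather than proved or attributed to a precisely quoted statement. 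A reduction to a black box is a proof only if the black box is identified and its hypotheses are verified; as written, your argument establishes nothing beyond the pointwise bound $\Phi_{N}(\alpha)\le g^{N-1}\exp\bigl(-c\sum_{1\le i<N}\lVert\alpha(g^{i}+g^{2N-i})\rVert^{2}\bigr)$, which is very far from the averaged estimate claimed.

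The preparatory reductions you do carry out are correct but largely beside the point. Clearing denominators, the Gaussian-type decay $|\psi(\beta)|\le g\exp(-c\lVert\beta\rVert^{2})$, and the parity observation that $g^{i}+g^{2N-i}\equiv 2g$ or $2g^{2}\pmod{g^{3}-g}$ for $1\le i<N$ are all fine; but the fact that the paper can quote the lemma, shift and all, with a one-line citation indicates that the cited proposition already covers (or trivially yields) the shifted form, so this bookkeeping buys nothing. If, on the other hand, your goal was a genuine reproof, then the missing portion --- uniform savings over all moduli $q\le Q$, the quantitative dichotomy between typical and resonant $q$ producing the two exponents, and the equidistribution factor $e^{-\sigma_{\infty}N/\log Q}$ --- constitutes essentially the entire argument of \cite{TuxanidyPanario}, and your sketch gives no mechanism for any of it. The clean fix is the one the paper adopts: cite Proposition~8.1 of \cite{TuxanidyPanario} directly, and reserve original argument for the places where this paper genuinely departs from that reference.
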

%%%%%%%%%%%%%%%%%%%%%%%%%%%%%%%%%%%%%%%%
\begin{proof}
See Proposition~8.1 of \cite{TuxanidyPanario}.
\end{proof}

%%%%%%%%%%%%%%%%%%%%%%%%%%%%%%%%%%%%%%%%
Recall the notation $\Pi_{k,g}(N)$ given in \cref{def:Pi_k_g}.
We prepare a minor variant of the Bombieri--Vinogradov type result of Tuxanidy--Panario:
%%%%%%%%%%%%%%%%%%%%%%%%%%%%%%%%%%%%%%%%
\begin{lemma}
\label{lem:Tuxanidy_Panario_BV_variant}
For $N\in\mathbb{Z}_{\ge0}$, $\epsilon>0$ and $1\le Q\le g^{(\frac{4}{5}-\epsilon)N}$, we have
\[
\sum_{\substack{
1\le q\le Q\\
(q,g^{3}-g)=1
}}
q^{-\frac{1}{2}}
\max_{a,k\in\mathbb{Z}}
\biggl|
\#\Pi_{k,g}(2N+1;a,q)
-
\frac{1}{q}\#\Pi_{k,g}(2N+1)
\biggr|
\ll
g^{N-c\sqrt{N}}
\]
with some constant $c=c(g,\epsilon)>0$,
where the implicit constant depends only on $g$ and $\epsilon$.
\end{lemma}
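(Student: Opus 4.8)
The plan is to express the discrepancy through additive characters and reduce everything to the exponential-sum estimate of \cref{lem:TuxanidyPanario_hybrid}. Writing $G\coloneqq g^{3}-g$ and detecting the two congruence conditions by
\[
\mathbbm{1}_{n\equiv a\ \mod{q}}=\frac1q\sum_{h\ \mod{q}}e\Bigl(\tfrac{h(n-a)}{q}\Bigr),\qquad
\mathbbm{1}_{n\equiv k\ \mod{G}}=\frac1G\sum_{\ell\ \mod{G}}e\Bigl(\tfrac{\ell(n-k)}{G}\Bigr),
\]
I would sum over $n\in\Pi(2N+1)$ and recognise the inner sum as $S(2N+1;\frac hq+\frac\ell G)$. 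The term $h=0$ reproduces exactly $\frac1q\#\Pi_{k,g}(2N+1)$, so the discrepancy equals
\[
\#\Pi_{k,g}(2N+1;a,q)-\frac1q\#\Pi_{k,g}(2N+1)
=\frac{1}{qG}\sum_{\substack{h\ \mod{q}\\ h\neq0}}\sum_{\ell\ \mod{G}}e\Bigl(-\tfrac{ha}{q}-\tfrac{\ell k}{G}\Bigr)S\Bigl(2N+1;\tfrac hq+\tfrac\ell G\Bigr).
\]
Taking absolute values removes the dependence on $a$ and $k$, so the maximum over $a,k$ is at most $\frac{1}{qG}\sum_{h\neq0}\sum_{\ell}|S(2N+1;\frac hq+\frac\ell G)|$, and \cref{lem:S_to_Phi} replaces $|S|$ by $\Phi_N$.

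Next I would reduce the sum over $h$ to reduced residues: grouping $h\in\{1,\dots,q-1\}$ by $q'\coloneqq q/(h,q)$ gives $\sum_{h\neq0}\Phi_N(\frac hq+\frac\ell G)=\sum_{q'\mid q,\,q'\ge2}\astsum_{h'\ \mod{q'}}\Phi_N(\frac{h'}{q'}+\frac\ell G)$. Inserting this, the weight becomes $q^{-1/2}\cdot\frac1q=q^{-3/2}$; writing $q=q'm$ and summing the convergent series $\sum_{(m,G)=1}m^{-3/2}=O(1)$ collapses the $q$-sum and leaves, for each residue $\ell\ \mod{G}$, the weighted quantity
\[
\Sigma(\ell)\coloneqq\sum_{\substack{2\le q'\le Q\\(q',G)=1}}(q')^{-3/2}\astsum_{h'\ \mod{q'}}\Phi_N\Bigl(\tfrac{h'}{q'}+\tfrac{\ell}{G}\Bigr),
\]
so that the whole expression is $\ll\frac1G\sum_{\ell\ \mod{G}}\Sigma(\ell)$, with $O(1)$ residues $\ell$ to handle.

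To estimate $\Sigma(\ell)$ I would split the range of $q'$ dyadically into blocks $R<q'\le2R$ and apply \cref{lem:TuxanidyPanario_hybrid} (with its parameter $k=\ell$ and a fixed $\delta\in[\frac13,\frac25-\epsilon']$) on each block, where $(q')^{-3/2}\asymp R^{-3/2}$. This yields two contributions. The first, with coefficient $g^{(1-\delta-\sigma_1)N}$, has increasing summand $R^{1/2}e^{-\sigma_\infty N/\log R}$ and is dominated by $R\asymp Q$; here the hypothesis $Q\le g^{(\frac45-\epsilon)N}$ gives $Q^{1/2}\le g^{(\frac25-\frac\epsilon2)N}$, and choosing $\delta=\frac25-\epsilon'$ with $\epsilon'$ small compared with $\epsilon$ pushes the exponent strictly below $1$, so this piece even enjoys a power saving. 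The second contribution, with coefficient $g^{N}$, is $g^{N}\sum_{R}R^{-1/2-\sigma_1/\delta}e^{-\sigma_\infty N/\log R}$; writing $\lambda=\frac12+\frac{\sigma_1}\delta$, its summand $e^{-\lambda\log R-\sigma_\infty N/\log R}$ is maximised at $\log R\asymp\sqrt N$, where its value is $e^{-2\sqrt{\lambda\sigma_\infty N}}=g^{-c\sqrt N}$. Summing over the $O(N)$ dyadic blocks costs only a factor $N$, which is absorbed, so this term is $\ll g^{N-c\sqrt N}$.

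I expect this last balancing to be the main obstacle: it is exactly the competition between the polynomial decay $R^{-1/2-\sigma_1/\delta}$ coming from the weight and the exponential factor $e^{-\sigma_\infty N/\log R}$ supplied by \cref{lem:TuxanidyPanario_hybrid} that forces the optimal cut at $\log R\asymp\sqrt N$ and thereby produces the saving $g^{-c\sqrt N}$ rather than merely a power of $g^{N}$. Once both dyadic contributions are seen to be $\ll g^{N-c\sqrt N}$, summing the $O(1)$ residues $\ell\ \mod{G}$ completes the proof, the final constant $c=c(g,\epsilon)>0$ being determined by $\sigma_1$, $\sigma_\infty$ and the chosen $\delta$.
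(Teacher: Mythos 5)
Your proposal is correct and follows essentially the same route as the paper's proof: orthogonality of additive characters with the $h=0$ term giving the main term, passage to $\Phi_N$ via \cref{lem:S_to_Phi}, reduction to reduced residues by grouping $h$ according to $(h,q)$, dyadic decomposition, and \cref{lem:TuxanidyPanario_hybrid} with $\delta$ close to $\frac{2}{5}$, the saving $g^{-c\sqrt{N}}$ coming from the same balancing of $R^{-\lambda}$ against $e^{-\sigma_\infty N/\log R}$ at $\log R\asymp\sqrt{N}$. The only deviations are cosmetic: you collapse the convergent sum $\sum_m m^{-3/2}$ before estimating (the paper keeps the outer $d^{-3/2}$ sum), and you absorb the $O(N)$ dyadic blocks into the exponential rather than reserving a factor $R^{-1/4}$ to make the block sum converge.
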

%%%%%%%%%%%%%%%%%%%%%%%%%%%%%%%%%%%%%%%%
\begin{proof}
By the orthogonality of additive characters, we have
\begin{align}
&\#\Pi_{k,g}(2N+1;a,q)
-
\frac{1}{q}\#\Pi_{k,g}(2N+1)\\
&=
\frac{1}{(g^{3}-g)q}
\sum_{\ell\ \mod{g^{3}-g}}
\sum_{\substack{
h\ \mod{q}\\
h\not\equiv0\ \mod{q}
}}
e\biggl(-\frac{ah}{q}-\frac{k\ell}{g^{3}-g}\biggr)
S\biggl(2N+1;\frac{h}{q}+\frac{\ell}{g^{3}-g}\biggr).
\end{align}
We thus have
\begin{align}
&\sum_{\substack{
1\le q\le Q\\
(q,g^{3}-g)=1
}}
q^{-\frac{1}{2}}
\max_{a,k\in\mathbb{Z}}
\biggl|
\#\Pi_{k,g}(2N+1;a,q)
-
\frac{1}{q}\#\Pi_{k,g}(2N+1)
\biggr|\\
&\le
\frac{1}{g^{3}-g}
\sum_{\ell\ \mod{g^{3}-g}}
\sum_{\substack{
2\le q\le Q\\
(q,g^{3}-g)=1
}}
q^{-\frac{3}{2}}
\sum_{\substack{
h\ \mod{q}\\
h\not\equiv0\ \mod{q}
}}
\biggl|S\biggl(2N+1;\frac{h}{q}+\frac{\ell}{g^{3}-g}\biggr)\biggr|\\
&\le
\max_{\ell\in\mathbb{Z}}
\sum_{\substack{
2\le q\le Q\\
(q,g^{3}-g)=1
}}
q^{-\frac{3}{2}}
\sum_{\substack{
h\ \mod{q}\\
h\not\equiv0\ \mod{q}
}}
\biggl|S\biggl(2N+1;\frac{h}{q}+\frac{\ell}{g^{3}-g}\biggr)\biggr|
\end{align}
since the contribution of the term with $q=1$ is zero.
We thus bound the sum
\[
S
\coloneqq
\sum_{\substack{
2\le q\le Q\\
(q,g^{3}-g)=1
}}
q^{-\frac{3}{2}}
\sum_{\substack{
h\ \mod{q}\\
h\not\equiv0\ \mod{q}
}}
\biggl|S\biggl(2N+1;\frac{h}{q}+\frac{\ell}{g^{3}-g}\biggr)\biggr|
\]
for arbitrary $\ell\in\mathbb{Z}$.
By classifying the terms by the value of $d=(h,q)$, we have
\begin{align}
S
&=
\sum_{\substack{
2\le q\le Q\\
(q,g^{3}-g)=1
}}
q^{-\frac{3}{2}}
\sum_{\substack{d\mid q\\d\neq q}}
\sum_{\substack{
1\le h<q\\
(h,q)=d
}}
\biggl|S\biggl(2N+1;\frac{h}{q}+\frac{\ell}{g^{3}-g}\biggr)\biggr|\\
&=
\sum_{\substack{
d\le Q/2\\
(d,g^{3}-g)=1
}}
\sum_{\substack{
2\le q\le Q\\
(q,g^{3}-g)=1\\
d\mid q\\
q\neq d
}}
q^{-\frac{3}{2}}
\sum_{\substack{
1\le h<q\\
(h,q)=d
}}
\biggl|S\biggl(2N+1;\frac{h}{q}+\frac{\ell}{g^{3}-g}\biggr)\biggr|\\
&=
\sum_{\substack{
d\le Q/2\\
(d,g^{3}-g)=1
}}
d^{-\frac{3}{2}}
\sum_{\substack{
2\le q\le Q/d\\
(q,g^{3}-g)=1
}}
q^{-\frac{3}{2}}
\astsum_{h\ \mod{q}}
\biggl|S\biggl(2N+1;\frac{h}{q}+\frac{\ell}{g^{3}-g}\biggr)\biggr|.
\end{align}
We decompose the inner sum dyadically and reduce its estimate to the dyadic sum
\[
T_{R}
\coloneqq
\sum_{\substack{
R/2\le q\le R\\
(q,g^{3}-g)=1
}}
q^{-\frac{3}{2}}
\astsum_{h\ \mod{q}}
\biggl|S\biggl(2N+1;\frac{h}{q}+\frac{\ell}{g^{3}-g}\biggr)\biggr|
\]
with $2\le R\le Q/d$. By \cref{lem:S_to_Phi} and \cref{lem:TuxanidyPanario_hybrid}
with $\delta=\frac{2}{5}-\frac{\epsilon}{2}\in[\frac{1}{3},\frac{2}{5}-\frac{\epsilon}{2}]\subset(0,1]$, we have
\begin{align}
T_{R}
&\ll
R^{-\frac{3}{2}}
\sum_{\substack{
2\le q\le R\\
(q,g^{3}-g)=1
}}
\astsum_{h\ \mod{q}}
\biggl|S\biggl(2N+1;\frac{h}{q}+\frac{\ell}{g^{3}-g}\biggr)\biggr|\\
&\ll
R^{-\frac{3}{2}}
\sum_{\substack{
2\le q\le R\\
(q,g^{3}-g)=1
}}
\astsum_{h\ \mod{q}}
\Phi_{N}\biggl(\frac{h}{q}+\frac{\ell}{g^{3}-g}\biggr)\\
&\ll
(
R^{\frac{1}{2}}g^{(1-(\frac{2}{5}-\frac{\epsilon}{2})-\sigma_{1}(g,\frac{\epsilon}{2}))N}
+
R^{-\frac{1}{2}-\sigma_{1}(g,\frac{\epsilon}{2})}
g^{N}
)
e^{-\frac{\sigma_{\infty}(g)N}{\log R}}.
\end{align}
By summing over $R$ dyadically with noting that
\[
R^{-\frac{1}{4}-\sigma_{1}(g,\frac{\epsilon}{2})}
e^{-\frac{\sigma_{\infty}(g)N}{\log R}}
\le
g^{-c\sqrt{N}}
\]
for $R\ge2$ with some $c=c(g,\epsilon)>0$, since $Q\le g^{(\frac{4}{5}-\epsilon)N}$, we get
\begin{align}
\sum_{\substack{
2\le q\le Q/d\\
(q,g^{3}-g)=1
}}
q^{-\frac{3}{2}}
\astsum_{h\ \mod{q}}
\biggl|S\biggl(2N+1;\frac{h}{q}+\frac{\ell}{g^{3}-g}\biggr)\biggr|
&\ll
(Q/d)^{\frac{1}{2}}g^{(1-(\frac{2}{5}-\frac{\epsilon}{2})-\sigma_{1}(g,\frac{\epsilon}{2}))N}
+
g^{N-c\sqrt{N}}\\
&\ll
g^{(1-\sigma_{1}(g,\frac{\epsilon}{2}))N}
+
g^{N-c\sqrt{N}}
\ll
g^{N-c\sqrt{N}}.
\end{align}
On inserting this estimate into the above estimate of $S$,
we obtain the lemma.
\end{proof}

%%%%%%%%%%%%%%%%%%%%%%%%%%%%%%%%%%%%%%%%
We also need the distribution of palindromes
divisible by a given divisor $d$ of $g^{3}-g$,
which seems not to be discussed so much in the preceding studies:
%%%%%%%%%%%%%%%%%%%%%%%%%%%%%%%%%%%%%%%%
\begin{lemma}
\label{lem:palindrome_AP_g3_g}
For $N\in\mathbb{N}$ and $d\mid g^{3}-g$, we have
\[
\#\Pi(2N+1;0,d)
=
\biggl(1+\frac{(d,g^{2}-1,2)}{g-1}\biggr)
\biggl(1-\frac{(d,g)}{g}\biggr)
\frac{1}{d}\#\Pi(2N+1)
+O(1),
\]
where the implicit constant depends only on $g$.
\end{lemma}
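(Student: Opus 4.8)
The plan is to count the admissible digit tuples directly: I will reduce the single congruence $n\equiv0\pmod d$ to a condition on the leading digit together with a weighted digit sum, and then evaluate the latter by additive-character orthogonality. The decisive feature is that $g^{2}\equiv1$ modulo the part of $d$ prime to $g$, which forces the per-digit exponential sums to telescope and yields the required bounded error.

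First I would factor $d=e\,d'$ with $e\coloneqq(d,g)$ and $d'\coloneqq d/e$. Since $v_{p}(g^{3}-g)=v_{p}(g)$ for every $p\mid g$, one has $e\mid g$, $(d',g)=1$ and $d'\mid g^{2}-1$, so $g^{2}\equiv1\pmod{d'}$; by the Chinese remainder theorem, $n\equiv0\pmod d$ is equivalent to $n\equiv0\pmod e$ and $n\equiv0\pmod{d'}$. Writing $n\in\Pi(2N+1)$ through its digits $n_{0}\in\{1,\dots,g-1\}$, $n_{1},\dots,n_{N}\in\{0,\dots,g-1\}$ as in \cref{lem:Pi_count}, the condition modulo $e$ collapses to $e\mid n_{0}$: since $g^{i}\equiv0\pmod e$ for $i\ge1$, we have $n\equiv n_{0}(1+g^{2N})\equiv n_{0}\pmod e$. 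There are exactly $g/e-1$ admissible leading digits.

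Next, for each fixed admissible $n_{0}$ I would count the tuples $(n_{1},\dots,n_{N})$ with $n\equiv0\pmod{d'}$. Reducing $g^{2}\equiv1\pmod{d'}$ gives
\[
n\equiv 2n_{0}+\sum_{i=1}^{N-1}c_{i}n_{i}+c_{N}n_{N}\pmod{d'},
\qquad
c_{i}=\begin{cases}2,&i\ \text{even},\\2g,&i\ \text{odd},\end{cases}\quad c_{N}=g^{N\bmod 2},
\]
so orthogonality modulo $d'$ turns the inner count into $\frac1{d'}\sum_{t\bmod d'}e(2tn_{0}/d')\prod_{i=1}^{N}\psi(tc_{i}/d')$. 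The key observation is that, with $\theta_{i}\coloneqq tc_{i}/d'$, the relation $g\theta_{i}\equiv\theta_{i+1}\pmod1$ gives $|\psi(\theta_{i})|=|\sin\pi\theta_{i+1}|/|\sin\pi\theta_{i}|$ whenever $\theta_{i}\notin\mathbb Z$, so the product over $1\le i\le N-1$ telescopes and is $O(1)$ unless $d'\mid 2t$ (the middle factor $\psi(tc_{N}/d')$ being trivially $O(1)$). The only surviving $t$ are therefore $t=0$, contributing $g^{N}/d'$, and — only when $2\mid d'$, which forces $g$ odd since $d'\mid g^{2}-1$ — the half-point $t=d'/2$, where every $\psi$ equals $g$ except the middle one, which equals $\sum_{0\le n<g}(-1)^{n}=1$; this contributes $g^{N-1}/d'$. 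Both surviving phases $e(2tn_{0}/d')$ equal $1$, so the inner count is $\frac{1}{d'}\bigl(g^{N}+(d',2)g^{N-1}-g^{N-1}\bigr)+O(1)$, uniformly in $n_{0}$.

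Finally I would sum over the $g/e-1$ admissible $n_{0}$, absorbing the $O(1)$ per digit into a single $O(1)$ because $d=O_{g}(1)$, and simplify using $\#\Pi(2N+1)=(g-1)g^{N}$, $(d',2)=(d,g^{2}-1,2)$ and $e=(d,g)$; the bookkeeping collapses exactly to the stated product. The main obstacle is securing the genuinely bounded error $O(1)$ rather than merely $o(g^{N})$: this rests on the telescoping identity above and, in particular, on isolating the single ``half-character'' $t=d'/2$ correctly and evaluating it through $\sum_{0\le n<g}(-1)^{n}=1$ for odd $g$. This is precisely the effect measured by the factor $(d,g^{2}-1,2)$, and it is the one place where the parity of $g$ genuinely intervenes.
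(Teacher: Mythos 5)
Your proposal is correct, and its core counting step takes a genuinely different route from the paper's, even though the setup coincides: both proofs factor $d=(d,g)\cdot(d,g^{2}-1)$ using $(g,g^{2}-1)=1$, expand $n$ in palindromic digits, and collapse the congruence modulo $(d,g)$ to the leading-digit condition $(d,g)\mid n_{0}$. Where you diverge is the treatment of the condition modulo $d'=(d,g^{2}-1)$. The paper stays purely elementary: it rewrites $n\equiv0$ modulo $d'$ as $2\sum_{i=1}^{N-1}n_{i}g^{i}\equiv-2n_{0}-n_{N}g^{N}$, extracts the forced divisibility $(d,g^{2}-1,2)\mid n_{N}$, halves the modulus to get a single congruence on the integer $\sum_{i=1}^{N-1}n_{i}g^{i-1}$, and then counts directly, using that this integer runs bijectively over $[0,g^{N-1})$; the parity factor $(d,g^{2}-1,2)$ arises from restricting $n_{N}$ and shrinking the modulus. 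You instead run finite Fourier analysis modulo $d'$: the principal character $t=0$ gives $g^{N}/d'$, the half-point $t=d'/2$ (present exactly when $2\mid d'$, which forces $g$ odd) gives the secondary term $g^{N-1}/d'$ that manufactures the same parity factor, and the remaining characters are controlled by the telescoping coming from $g^{2}\equiv1$ modulo $d'$. Both routes get the uniform $O(1)$ error only because $d'\mid g^{2}-1$ is bounded in terms of $g$. Your version has the merit of matching the exponential-sum machinery ($\psi$, $\Phi_{N}$) used elsewhere in the paper and of being mechanical once the congruence is written down; the paper's version avoids having to bound products of $\psi$'s at all. One spot in your sketch deserves an explicit half-line: after telescoping, the product $\prod_{i=1}^{N-1}|\psi(\theta_{i})|$ is a ratio $|\sin\pi\theta'|/|\sin\pi\theta_{1}|$ with both $\theta'$ and $\theta_{1}$ lying in $\{2t/d',\,2tg/d'\}$ modulo $1$, and you should record why this ratio is at most $g$ --- either via $|\sin(\pi g\alpha)|\le g|\sin(\pi\alpha)|$ or via the exact pairing $|\psi(2t/d')|\,|\psi(2tg/d')|=1$, which also shows most consecutive factors cancel exactly. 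With that line added, your argument is complete.
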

%%%%%%%%%%%%%%%%%%%%%%%%%%%%%%%%%%%%%%%%
\begin{proof}
For $n\in\Pi(2N+1)$, since $(g,g^{2}-1)=1$ and $d=(d,g^{3}-g)=(d,g)(d,g^{2}-1)$, we have
\begin{equation}
\label{lem:palindrome_AP_g3_g:divisibility_rephrase}
d\mid n
\iff
\left\{
\begin{aligned}
n&\equiv0\ \mod{(d,g)},\\
n&\equiv0\ \mod{(d,g^{2}-1)}.
\end{aligned}
\right.
\end{equation}
We then write $n\in\Pi(2N+1)$ as
\begin{equation}
\label{lem:palindrome_AP_g3_g:n_expansion}
n
=
\sum_{i=0}^{N-1}
n_{i}(g^{i}+g^{2N-i})
+
n_{N}g^{N}
\quad\text{with}\quad
n_{0},\ldots,n_{N}\in\{0,\ldots,g-1\}
\ \text{and}\ 
n_{0}\neq0.
\end{equation}
Under this expansion \cref{lem:palindrome_AP_g3_g:n_expansion}, we have
\[
n
\equiv
n_{0}
\ \mod{(d,g)}
\]
and so the first congruence on the right-hand side of \cref{lem:palindrome_AP_g3_g:divisibility_rephrase}
can be rephrased as
\begin{equation}
\label{lem:palindrome_AP_g3_g:d_g_cond_rephrased}
n\equiv0\ \mod{(d,g)}
\iff
(d,g)\mid n_{0}.
\end{equation}
Also, under the expansion \cref{lem:palindrome_AP_g3_g:n_expansion}, we have
\begin{align}
n
\equiv
2
\sum_{i=0}^{N-1}
n_{i}g^{i}
+
n_{N}g^{N}\ \mod{(d,g^{2}-1)}
\end{align}
since
\[
g^{2N-i}
=
(g^{2})^{N-i}g^{i}
\equiv
g^{i}\ \mod{(d,g^{2}-1)}
\quad\text{for $i\in\{0,\ldots,N-1\}$}.
\]
Thus, the second congruence on the right-hand side of \cref{lem:palindrome_AP_g3_g:divisibility_rephrase}
can be rephrased as
\begin{equation}
\label{lem:palindrome_AP_g3_g:d_g2_1_cond_rephrased_pre}
n\equiv0\ \mod{(d,g^{2}-1)}
\iff
2
\sum_{i=1}^{N-1}
n_{i}g^{i}
\equiv
-2n_{0}-n_{N}g^{N}
\ \mod{(d,g^{2}-1)}.
\end{equation}
The right-hand side of this equivalence implies
\begin{equation}
\label{lem:palindrome_AP_g3_g:d_g2_1_cond_nN_cond}
(d,g^{2}-1,2)\mid n_{N}.
\end{equation}
Thus, the condition \cref{lem:palindrome_AP_g3_g:d_g2_1_cond_rephrased_pre} is further rephrased as
\begin{equation}
\label{lem:palindrome_AP_g3_g:d_g2_1_cond_rephrased}
\begin{aligned}
n\equiv0\ \mod{(d,g^{2}-1)}
&\iff
\left\{
\begin{gathered}
2
\sum_{i=1}^{N-1}
n_{i}g^{i}
\equiv
-2n_{0}-n_{N}g^{N}
\ \mod{(d,g^{2}-1)},\\
(d,g^{2}-1,2)\mid n_{N}
\end{gathered}
\right.\\
&\iff
\left\{
\begin{gathered}
\sum_{i=1}^{N-1}
n_{i}g^{i-1}
\equiv
a(n_{0},n_{N},g)
\ \mod{\frac{(d,g^{2}-1)}{(d,g^{2}-1,2)}},\\
(d,g^{2}-1,2)\mid n_{N}
\end{gathered}
\right.
\end{aligned}
\end{equation}
with the residue $a(n_{0},n_{N},g)\ \mod{\frac{(d,g^{2}-1)}{(d,g^{2}-1,2)}}$ determined by
\[
2ga(n_{0},n_{N},g)
\equiv
-2n_{0}-n_{N}g^{N}
\ \mod{(d,g^{2}-1)}
\]
provided $(d,g^{2}-1,2)\mid n_{N}$.
By combining \cref{lem:palindrome_AP_g3_g:d_g_cond_rephrased} and \cref{lem:palindrome_AP_g3_g:d_g2_1_cond_rephrased}
we get
\begin{equation}
\label{lem:palindrome_AP_g3_g:first_reduction}
\#\Pi(2N+1,0,d)
=
\sum_{\substack{
1\le n_{0}<g\\
(d,g)\mid n_{0}
}}
\sum_{\substack{
0\le n_{N}<g\\
(d,g^{2}-1,2)\mid n_{N}
}}
\sum_{\substack{
0\le n_{1},\ldots,n_{N-1}<g\\
\sum_{i=1}^{N-1}
n_{i}g^{i-1}
\equiv
a(n_{0},n_{N},g)
\ \mod{\frac{(d,g^{2}-1)}{(d,g^{2}-1,2)}}
}}
1.
\end{equation}
By considering the base $g$ expansion, we find that the form
\[
\sum_{i=1}^{N-1}
n_{i}g^{i-1}
\quad\text{with}\quad
0\le n_{1},\ldots,n_{N-1}<g
\]
runs over the integers in $[0,g^{N-1})$ one by one.
We thus have
\begin{equation}
\label{lem:palindrome_AP_g3_g:after_folding_base_g_expansion}
\begin{aligned}
\#\Pi(2N+1,0,d)
&=
\sum_{\substack{
1\le n_{0}<g\\
(d,g)\mid n_{0}
}}
\sum_{\substack{
0\le n_{N}<g\\
(d,g^{2}-1,2)\mid n_{N}
}}
\sum_{\substack{
0\le n<g^{N-1}\\
n
\equiv
a(n_{0},n_{N},g)
\ \mod{\frac{(d,g^{2}-1)}{(d,g^{2}-1,2)}}
}}
1\\
&=
\frac{(d,g^{2}-1,2)}{(d,g^{2}-1)}g^{N-1}
\sum_{\substack{
1\le n_{0}<g\\
(d,g)\mid n_{0}
}}
\sum_{\substack{
0\le n_{N}<g\\
(d,g^{2}-1,2)\mid n_{N}
}}
1
+
O(1).
\end{aligned}
\end{equation}
We have
\begin{equation}
\label{lem:palindrome_AP_g3_g:n0_counting}
\sum_{\substack{
1\le n_{0}<g\\
(d,g)\mid n_{0}
}}
1
=
\sum_{\substack{
1\le n_{0}\le g\\
(d,g)\mid n_{0}
}}
1
-1
=
\frac{g}{(d,g)}
\biggl(1-\frac{(d,g)}{g}\biggr).
\end{equation}
Also, by noting that $(d,g^{2}-1,2)=2$ occurs only when $g$ is odd, we have
\begin{equation}
\label{lem:palindrome_AP_g3_g:nN_counting}
\begin{aligned}
\sum_{\substack{
0\le n_{N}<g\\
(d,g^{2}-1,2)\mid n_{N}
}}
1
&=
\left\{
\begin{array}{>{\displaystyle}cl}
g&\text{if $(d,g^{2}-1,2)=1$},\\[2mm]
\frac{g+1}{2}&\text{if $(d,g^{2}-1,2)=2$}
\end{array}
\right.\\
&=
\frac{g-1}{(d,g^{2}-1,2)}
\biggl(1+\frac{(d,g^{2}-1,2)}{g-1}\biggr).
\end{aligned}
\end{equation}
On inserting \cref{lem:palindrome_AP_g3_g:n0_counting}
and \cref{lem:palindrome_AP_g3_g:nN_counting}
into \cref{lem:palindrome_AP_g3_g:after_folding_base_g_expansion}
and recalling \cref{lem:Pi_count},
we obtain the assertion.
\end{proof}

%%%%%%%%%%%%%%%%%%%%%%%%%%%%%%%%%%%%%%%%
For \cref{thm:relatively_prime_palindrome:g3_g},
we need the following results:
%%%%%%%%%%%%%%%%%%%%%%%%%%%%%%%%%%%%%%%%
\begin{lemma}[{Tuxanidy--Panario~\cite{TuxanidyPanario}}]
\label{lem:TuxanidyPanario_BV}
For $x,Q\ge1$, $\epsilon>0$, we have
\[
\sum_{\substack{
q\le Q\\
(q,g^{3}-g)=1
}}
\sup_{y\le x}
\max_{a\in\mathbb{Z}}
\biggl|
\#\mathscr{P}^{\ast}(y;a,q)
-
\frac{1}{q}\#\mathscr{P}^{\ast}(y)
\biggr|
\ll
\#\mathscr{P}^{\ast}(x)
e^{-c\sqrt{\log x}}
\]
with some $c>0$ provided
\[
Q\le x^{\frac{1}{5}-\epsilon},
\]
where the constant $c$ and the implicit constant depends only on $g$ and $\epsilon$.
\end{lemma}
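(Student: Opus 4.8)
The plan is to identify this with the Bombieri--Vinogradov theorem for palindromes of Tuxanidy--Panario (the level $\frac15$ result advertised in the introduction) and to account for the coprimality constraint $(n,g^{3}-g)=1$ defining $\mathscr{P}^{\ast}$ by splitting into residue classes modulo $g^{3}-g$. Since $g^{3}-g$ depends only on $g$, the set $\mathscr{P}^{\ast}(y)$ is the union of the palindromes up to $y$ lying in the $O_{g}(1)$ residue classes $k \bmod g^{3}-g$ with $(k,g^{3}-g)=1$. Writing $\mathscr{P}_{k}(y)$ for the palindromes up to $y$ in the class $k \bmod g^{3}-g$, I would use the exact decomposition
\[
\#\mathscr{P}^{\ast}(y;a,q)-\frac1q\#\mathscr{P}^{\ast}(y)
=
\sum_{\substack{k \bmod g^{3}-g\\(k,g^{3}-g)=1}}
\Bigl(\#\mathscr{P}_{k}(y;a,q)-\frac1q\#\mathscr{P}_{k}(y)\Bigr),
\]
which is valid term by term because coprimality to $g^{3}-g$ is entirely determined by the class $k$.

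The key point is that the main term is matched \emph{within each class $k$}: one never attempts to equidistribute palindromes modulo $g^{3}-g$, which fails as \cref{lem:palindrome_AP_g3_g} shows, but only modulo $q$ with $(q,g^{3}-g)=1$. This is precisely the hybrid structure already exploited in \cref{lem:Tuxanidy_Panario_BV_variant}, where detecting $n\equiv k \bmod g^{3}-g$ and $n\equiv a \bmod q$ simultaneously produces the frequencies $\frac{h}{q}+\frac{\ell}{g^{3}-g}$ and hence the exponential sums $S(N;\frac{h}{q}+\frac{\ell}{g^{3}-g})$ controlled through $\Phi_{N}$ by \cref{lem:S_to_Phi} and \cref{lem:TuxanidyPanario_hybrid}. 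Thus each inner discrepancy is of exactly the type that the Tuxanidy--Panario level-of-distribution input bounds, and since $\max_{a}$ and $\sup_{y\le x}$ distribute over the finite $k$-sum (via $\max_{a}\lvert\sum_{k}(\cdots)\rvert\le\sum_{k}\max_{a}\lvert\cdots\rvert$ and likewise for the supremum), the boundedly many classes may be handled one at a time with no loss.

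Concretely I would pass from the single-length estimates to the statement over all palindromes up to $x$ by summing the per-length discrepancies over the lengths $L\le\log_{g}y+O(1)$ and reducing the $q$-sum dyadically, in the spirit of the reduction to the block $T_{R}$ in the proof of \cref{lem:Tuxanidy_Panario_BV_variant}; in the range $Q\le x^{\frac15-\epsilon}$ this yields a saving of the shape $e^{-c\sqrt{\log x}}$ against the trivial size $\#\mathscr{P}^{\ast}(x)\asymp x^{1/2}$. The genuine obstacle is not this bookkeeping but the underlying analytic input, namely the level of distribution $\frac15$ for palindromes that rests on the exponential sum estimate \cref{lem:TuxanidyPanario_hybrid}; this is the hard theorem of Tuxanidy--Panario, which I would simply cite. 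Granting it, the only care needed here is to verify that the finitely many coprime residue classes modulo $g^{3}-g$ carry the coprimality condition without disturbing the matched main term, and that the outer supremum over $y\le x$ and maximum over $a$ are absorbed uniformly.
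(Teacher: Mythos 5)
The paper's entire proof of this lemma is a one-line citation: the statement, including the coprimality condition defining $\mathscr{P}^{\ast}$, the supremum over $y\le x$, the maximum over $a$, and the level $Q\le x^{\frac{1}{5}-\epsilon}$, is quoted verbatim as Theorem~1.5 of Tuxanidy--Panario, so nothing needs to be decomposed or re-derived. Your residue-class decomposition modulo $g^{3}-g$ is unnecessary if the object you ``simply cite'' is that theorem; in that reading your scaffolding merely wraps a citation of the very statement being proved. (A side remark: the class structure is less delicate than you suggest, since coprimality to $g^{3}-g$ already forces the length to be odd --- every even-length palindrome is divisible by $g+1$.)

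If instead you intend a genuine derivation from the fixed-length inputs available in this paper, namely \cref{lem:S_to_Phi} and \cref{lem:TuxanidyPanario_hybrid}, then there is a real gap. Those lemmas control exponential sums over the \emph{complete} set $\Pi(2N+1)$, whose product structure $\Phi_{N}(\alpha)=\prod_{1\le i<N}|\psi(\alpha(g^{i}+g^{2N-i}))|$ is exactly what makes \cref{lem:TuxanidyPanario_hybrid} applicable. Under $\sup_{y\le x}$ the top length class is truncated: one needs the palindromes of length $2N+1$ lying in $[g^{2N},y]$, and this truncated class dominates $\#\mathscr{P}^{\ast}(y)$, so it cannot be discarded or trivially bounded. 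A truncated palindrome set corresponds to an interval of ``first halves,'' and the associated exponential sum no longer factors as a product over independent digits, so completing it requires additional analysis --- precisely the incomplete-range work that forms part of Tuxanidy--Panario's own proof of their Theorem~1.5 and is nowhere reproduced in this paper. Declaring this step ``bookkeeping'' and asserting that the supremum over $y\le x$ is ``absorbed uniformly'' is where your sketch breaks down; the paper avoids the issue entirely by citing the finished theorem rather than rebuilding it from the exponential sum estimate.
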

%%%%%%%%%%%%%%%%%%%%%%%%%%%%%%%%%%%%%%%%
\begin{proof}
See Theorem~1.5 of \cite{TuxanidyPanario}.
\end{proof}

%%%%%%%%%%%%%%%%%%%%%%%%%%%%%%%%%%%%%%%%
\begin{lemma}[{Tuxanidy--Panario~\cite{TuxanidyPanario}}]
\label{lem:P_ast_size}
For $x\ge1$, we have
\[
\#\mathscr{P}^{\ast}(x)\asymp\sqrt{x},
\]
where the implicit constant depends only on $g$.
\end{lemma}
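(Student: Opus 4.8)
The plan is to establish matching bounds $\#\mathscr{P}^{\ast}(x)\ll_{g}\sqrt{x}$ and $\#\mathscr{P}^{\ast}(x)\gg_{g}\sqrt{x}$ by comparing $\mathscr{P}^{\ast}(x)$ with the set of \emph{all} palindromes up to $x$, organized by length via \cref{lem:Pi_count}. For the upper bound I would simply drop the coprimality condition: if $L$ denotes the largest length of a palindrome not exceeding $x$, then $g^{L-1}\le x$, so $L\le\log_{g}x+1$, and summing the counts from \cref{lem:Pi_count} gives
\[
\#\mathscr{P}^{\ast}(x)\le\sum_{N\le L}\#\Pi(N)\ll_{g}\sum_{N\le L}g^{N/2}\ll_{g}g^{L/2}\ll_{g}\sqrt{x},
\]
the geometric sum being controlled by its top term.

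For the lower bound the idea is to count palindromes of a single, conveniently chosen \emph{odd} length, so that \cref{lem:palindrome_AP_g3_g} applies. Put $N_{0}=\lfloor\log_{g}x\rfloor$; every palindrome of length $N_{0}$ (and of length $N_{0}-1$) lies in $[g^{N_{0}-1},g^{N_{0}})\subseteq[1,x]$, and its count is $\asymp_{g}\sqrt{x}$. Choosing $N=2M+1\in\{N_{0}-1,N_{0}\}$ to be odd and detecting coprimality to $g^{3}-g$ by M\"obius inversion over its divisors, I would insert \cref{lem:palindrome_AP_g3_g} into each of the $O_{g}(1)$ resulting terms to obtain
\[
\#\{n\in\Pi(2M+1):(n,g^{3}-g)=1\}
=
C(g)\,\#\Pi(2M+1)+O_{g}(1),
\]
where $C(g)=\sum_{d\mid g^{3}-g}\frac{\mu(d)}{d}\bigl(1+\frac{(d,g^{2}-1,2)}{g-1}\bigr)\bigl(1-\frac{(d,g)}{g}\bigr)$.

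The crux is the positivity $C(g)>0$, i.e.\ that a positive proportion of palindromes survive the coprimality sieve. Using $(g,g^{2}-1)=1$, every $d\mid g^{3}-g$ factors uniquely as $d=d_{1}d_{2}$ with $d_{1}\mid g$ and $d_{2}\mid g^{2}-1$, and then $(d,g)=d_{1}$ and $(d,g^{2}-1,2)=(d_{2},2)$; this splits $C(g)$ as a product $A\cdot B$ of two independent divisor sums. The factor over $d_{1}\mid g$ collapses through $\sum_{d_{1}\mid g}\mu(d_{1})=0$ to $A=\phi(g)/g>0$, while the factor $B$ over $d_{2}\mid g^{2}-1$ reduces to a positive multiple of $\phi(g^{2}-1)/(g^{2}-1)$. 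The one delicate point is the prime $2$, whose local contribution vanishes when $g$ is odd; this is harmless because it appears only inside the summand carrying $\frac{(d_{2},2)}{g-1}$, so $B>0$ in both parities. Hence $C(g)>0$, which yields $\#\mathscr{P}^{\ast}(x)\gg_{g}g^{M}\asymp_{g}\sqrt{x}$ for all large $x$; for bounded $x$ the claim is trivial since $1\in\mathscr{P}^{\ast}(x)$.

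I expect this positivity step to be the main obstacle: the naive worry is that the alternating M\"obius sum could cancel, and the factorization $C(g)=(\phi(g)/g)\cdot B$ together with the careful isolation of the prime $2$ is exactly what rules this out. The remaining ingredients — the geometric bound for the upper estimate, the length bookkeeping, and the absorption of the $O_{g}(1)$ error from \cref{lem:palindrome_AP_g3_g} — are routine once $C(g)>0$ is in hand.
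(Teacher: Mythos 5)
Your proposal is correct, but it takes a genuinely different route from the paper: the paper does not prove this lemma internally at all, its ``proof'' being a one-line citation to the last assertion of Lemma~9.1 of Tuxanidy--Panario, whereas you derive the bound self-containedly from the paper's own \cref{lem:Pi_count} and \cref{lem:palindrome_AP_g3_g}. Your argument checks out in detail: the upper bound by dropping coprimality and summing $\#\Pi(N)\ll g^{N/2}$ geometrically is immediate; for the lower bound, restricting to a single odd length $2M+1\in\{N_{0}-1,N_{0}\}$, Möbius inversion over the $O_{g}(1)$ divisors of $g^{3}-g$ and \cref{lem:palindrome_AP_g3_g} give $C(g)\#\Pi(2M+1)+O_{g}(1)$, and your positivity argument is sound --- since $(g,g^{2}-1)=1$ the sum factors as $A\cdot B$ with $A=\phi(g)/g>0$ (using $\sum_{d_{1}\mid g}\mu(d_{1})=0$ for $g\ge2$), while $B=\frac{\phi(g^{2}-1)}{g^{2}-1}\bigl(1+\frac{\mathbbm{1}_{2\nmid g}\cdot 0+\mathbbm{1}_{2\mid g}}{g-1}\bigr)>0$ in both parities, the point being exactly as you say: when $g$ is odd the Euler factor $1-\frac{(2,2)}{2}=0$ annihilates only the sum carrying $\frac{(d_{2},2)}{g-1}$, not the leading sum. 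This computation is in fact the linear analogue of the paper's own evaluation of $M_{11}$ in the proof of the first theorem (compare \cref{thm:relatively_prime_palindrome:N1:M11:f_product}), so your proof costs little that is not already present. What each approach buys: the citation is shorter and matches the attribution of the lemma to Tuxanidy--Panario; your version makes the paper self-contained on this point and makes the positive density of palindromes coprime to $g^{3}-g$ explicit, with the constant $C(g)=\frac{\phi(g)}{g}\cdot B$ visible. Two trivial slips, neither affecting correctness: palindromes of length $N_{0}-1$ lie in $[g^{N_{0}-2},g^{N_{0}-1})$, not in $[g^{N_{0}-1},g^{N_{0}})$; and you should note that the threshold beyond which the $O_{g}(1)$ error is absorbed depends on $g$, which is harmless since the bounded range is handled by $1\in\mathscr{P}^{\ast}(x)$.
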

%%%%%%%%%%%%%%%%%%%%%%%%%%%%%%%%%%%%%%%%
\begin{proof}
See the last assertion of Lemma~9.1
of \cite{TuxanidyPanario}.
\end{proof}

%%%%%%%%%%%%%%%%%%%%%%%%%%%%%%%%%%%%%%%%
\section{Proof of the main theorems}
\label{sec:proof_mainthm}
We now prove \cref{thm:relatively_prime_palindrome}
and \cref{thm:relatively_prime_palindrome:g3_g}.
%%%%%%%%%%%%%%%%%%%%%%%%%%%%%%%%%%%%%%%%
\begin{proof}[Proof of \cref{thm:relatively_prime_palindrome}.]
Let us write
\[
\mathscr{N}
\coloneqq
\sum_{\substack{
m,n\in\Pi(2N+1)\\
(m,n)=1
}}
1.
\]
By using the well-known formula
\[
\sum_{d\mid M}\mu(d)
=
\mathbbm{1}_{M=1}
\quad\text{for $M\in\mathbb{N}$},
\]
we have
\begin{align}
\mathscr{N}
=
\sum_{m,n\in\Pi(2N+1)}
\sum_{d\mid(m,n)}
\mu(d)
=
\sum_{d<g^{2N+1}}
\mu(d)
\#\Pi(2N+1;0,d)^{2}.
\end{align}
Let $U\coloneqq g^{\frac{1}{5}N}$ and dissect the above sum as
\begin{equation}
\label{thm:relatively_prime_palindrome:N_N1_N2}
\mathscr{N}
=
\sum_{d\le U}
\mu(d)
\#\Pi(2N+1;0,d)^{2}
+
\sum_{U<d<g^{2N+1}}
\mu(d)
\#\Pi(2N+1;0,d)^{2}
\eqqcolon
\mathscr{N}_{1}+\mathscr{N}_{2}.
\end{equation}
We then calculate $\mathscr{N}_{1}$ and $\mathscr{N}_{2}$ separately.

%%%%%
For the sum $\mathscr{N}_{1}$, we first write the squarefree number $d$ as
\[
d=mf
\quad\text{with}\quad
(m,g^{3}-g)=1
\ \text{and}\ 
f=(d,g^{3}-g).
\]
Note that then $(m,f)=1$. This gives
\begin{equation}
\label{thm:relatively_prime_palindrome:N1:d_ef_decomp}
\mathscr{N}_{1}
=
\sum_{f\mid g^{3}-g}
\mu(f)
\sum_{\substack{
m\le U/f\\
(m,g^{3}-g)=1
}}
\mu(m)
\#\Pi(2N+1;0,mf)^{2}.
\end{equation}
For $\#\Pi(2N+1;0,mf)$, we first classify palindromes $\mod{g^{3}-g}$ to get
\begin{equation}
\label{thm:relatively_prime_palindrome:N1:g3_g_decomp}
\#\Pi(2N+1;0,mf)
=
\sum_{\substack{
1\le k\le g^{3}-g\\
f\mid k
}}
\#\Pi_{k,g}(2N+1;0,m).
\end{equation}
We compare this quantity with
\begin{equation}
\label{thm:relatively_prime_palindrome:N1:g3_g_decomp_counterpart}
\frac{1}{m}
\#\Pi(2N+1;0,f)
=
\sum_{\substack{
1\le k\le g^{3}-g\\
f\mid k
}}
\frac{1}{m}
\#\Pi_{k,g}(2N+1).
\end{equation}
We then have
\begin{align}
&\#\Pi(2N+1;0,mf)^{2}
-
\biggl(
\frac{1}{m}
\#\Pi(2N+1;0,f)
\biggr)^{2}\\
&=
\biggl(
\#\Pi(2N+1;0,mf)
-
\frac{1}{m}
\#\Pi(2N+1;0,f)
\biggr)
\biggl(
\#\Pi(2N+1;0,mf)
+
\frac{1}{m}
\#\Pi(2N+1;0,f)
\biggr)\\
&=
\biggl(
\sum_{\substack{
1\le k\le g^{3}-g\\
f\mid k
}}
\Bigl(
\#\Pi_{k,g}(2N+1;0,m)
-
\frac{1}{m}
\#\Pi_{k,g}(2N+1)\Bigr)\biggr)\\
&\hspace{40mm}
\times\biggl(
\#\Pi(2N+1;0,mf)
+
\frac{1}{m}
\#\Pi(2N+1;0,f)
\biggr).
\end{align}
By \cref{lem:BS_Brun_Titchmarsh}, we have
\[
\#\Pi(2N+1;0,mf)
+
\frac{1}{m}
\#\Pi(2N+1;0,f)
\ll
g^{N}m^{-\frac{1}{2}}
\]
and so
\begin{align}
&\#\Pi(2N+1;0,mf)^{2}
-
\biggl(
\frac{1}{m}
\#\Pi(2N+1;0,f)
\biggr)^{2}\\
&\ll
g^{N}m^{-\frac{1}{2}}
\sum_{\substack{
1\le k\le g^{3}-g\\
f\mid k
}}
\Bigl|
\#\Pi_{k,g}(2N+1;0,m)
-
\frac{1}{m}
\#\Pi_{k,g}(2N+1)
\Bigr|.
\end{align}
On inserting this into \cref{thm:relatively_prime_palindrome:N1:d_ef_decomp},
we get
\begin{equation}
\label{thm:relatively_prime_palindrome:N1:M1_E1}
\mathscr{N}_{1}
=
M_{1}+O(E_{1}),
\end{equation}
where
\begin{align}
M_{1}
&\coloneqq
\sum_{f\mid g^{3}-g}
\mu(f)
\#\Pi(2N+1;0,f)^{2}
\sum_{\substack{
m\le U/f\\
(m,g^{3}-g)=1
}}
\frac{\mu(m)}{m^{2}},\\
E_{1}
&\coloneqq
g^{N}
\sum_{f\mid g^{3}-g}
\sum_{\substack{
1\le k\le g^{3}-g\\
f\mid k
}}
\sum_{\substack{
m\le U/f\\
(m,g^{3}-g)=1
}}
m^{-\frac{1}{2}}
\Bigl|
\#\Pi_{k,g}(2N+1;0,m)
-
\frac{1}{m}
\#\Pi_{k,g}(2N+1)
\Bigr|.
\end{align}
For $M_{1}$, since
\[
\sum_{\substack{
m\le U/f\\
(m,g^{3}-g)=1
}}
\frac{\mu(m)}{m^{2}}
=
\frac{1}{\zeta(2)}
\biggl(\prod_{p\mid g^{3}-g}\biggl(1-\frac{1}{p^{2}}\biggr)\biggr)^{-1}
+
O\biggl(\frac{f}{U}\biggr),
\]
by \cref{lem:Pi_count}, we have
\begin{equation}
\label{thm:relatively_prime_palindrome:N1:M1:after_inner_sum}
\begin{aligned}
M_{1}
&=
\frac{1}{\zeta(2)}
\biggl(\prod_{p\mid g^{3}-g}\biggl(1-\frac{1}{p^{2}}\biggr)\biggr)^{-1}
\sum_{f\mid g^{3}-g}
\mu(f)
\#\Pi(2N+1;0,f)^{2}
+
O(g^{2N}U^{-1})\\
&\eqqcolon
M_{11}
+
O(g^{2N}U^{-1}),\quad\text{say}.
\end{aligned}
\end{equation}
On inserting \cref{lem:palindrome_AP_g3_g}, we have
\begin{equation}
\label{thm:relatively_prime_palindrome:N1:M11:after_inner_sum}
\begin{aligned}
M_{11}
&=
\frac{1}{\zeta(2)}
\#\Pi(2N+1)^{2}
\biggl(\prod_{p\mid g^{3}-g}\biggl(1-\frac{1}{p^{2}}\biggr)\biggr)^{-1}\\
&\hspace{0.1\textwidth}
\times
\sum_{f\mid g^{3}-g}
\frac{\mu(f)}{f^{2}}
\biggl(1+\frac{(f,g^{2}-1,2)}{g-1}\biggr)^{2}
\biggl(1-\frac{(f,g)}{g}\biggr)^{2}
+
O(g^{N}).
\end{aligned}
\end{equation}
When $g$ is even, we have
\begin{equation}
\label{thm:relatively_prime_palindrome:N1:M11:after_inner_sum_even}
\begin{aligned}
M_{11}
&=
\frac{1}{\zeta(2)}
\#\Pi(2N+1)^{2}
\biggl(\frac{g}{g-1}\biggr)^{2}
\biggl(\prod_{p\mid g^{3}-g}\biggl(1-\frac{1}{p^{2}}\biggr)\biggr)^{-1}\\
&\hspace{0.1\textwidth}
\times
\sum_{f\mid g^{3}-g}
\frac{\mu(f)}{f^{2}}
\biggl(1-\frac{2(f,g)}{g}+\frac{(f,g)^{2}}{g^{2}}\biggr)
+
O(g^{N}).
\end{aligned}
\end{equation}
By the multiplicativity of $(f,g)$ as a function of $f$ and by $(g,g^{2}-1)=1$, we have
\begin{equation}
\label{thm:relatively_prime_palindrome:N1:M11:f_product}
\begin{aligned}
&\sum_{f\mid g^{3}-g}
\frac{\mu(f)}{f^{2}}
\biggl(1-\frac{2(f,g)}{g}+\frac{(f,g)^{2}}{g^{2}}\biggr)\\
&=
\prod_{p\mid g^{3}-g}\biggl(1-\frac{1}{p^{2}}\biggr)
-
\frac{2}{g}
\prod_{p\mid g}\biggl(1-\frac{1}{p}\biggr)
\prod_{p\mid g^{2}-1}\biggl(1-\frac{1}{p^{2}}\biggr)
+
\frac{1}{g^{2}}
\prod_{p\mid g}(1-1)
\prod_{p\mid g^{2}-1}\biggl(1-\frac{1}{p^{2}}\biggr)\\
&=
\prod_{p\mid g^{3}-g}\biggl(1-\frac{1}{p^{2}}\biggr)
\biggl(
1-\frac{2}{g}\prod_{p\mid g}\biggl(\frac{p}{p+1}\biggr)
\biggr).
\end{aligned}
\end{equation}
Therefore, by \cref{thm:relatively_prime_palindrome:N1:M11:after_inner_sum_even}, when $g$ is even, we have
\begin{equation}
\label{thm:relatively_prime_palindrome:N1:M11:g_even}
M_{11}
=
\frac{1}{\zeta(2)}
\biggl(\frac{g}{g-1}\biggr)^{2}
\biggl(
1-\frac{2}{g}\prod_{p\mid g}\biggl(\frac{p}{p+1}\biggr)
\biggr)
\#\Pi(2N+1)^{2}
+
O(g^{N}).
\end{equation}
When $g$ is odd,
by \cref{thm:relatively_prime_palindrome:N1:M1:after_inner_sum}
and \cref{lem:palindrome_AP_g3_g},
we have
\begin{equation}
\label{thm:relatively_prime_palindrome:N1:M11:after_inner_sum_odd}
\begin{aligned}
M_{11}
&=
\frac{1}{\zeta(2)}
\#\Pi(2N+1)^{2}
\biggl(\frac{g+1}{g-1}\biggr)^{2}
\biggl(\prod_{p\mid g^{3}-g}\biggl(1-\frac{1}{p^{2}}\biggr)\biggr)^{-1}\\
&\hspace{0.1\textwidth}
\times
\sum_{f\mid g^{3}-g}
\frac{\mu(f)}{f^{2}}
\biggl(1-\frac{2(f,g)}{g}+\frac{(f,g)^{2}}{g^{2}}\biggr)\\
&\hspace{0.05\textwidth}
-
\frac{1}{\zeta(2)}
\#\Pi(2N+1)^{2}
\frac{2g+1}{(g-1)^{2}}
\biggl(\prod_{p\mid g^{3}-g}\biggl(1-\frac{1}{p^{2}}\biggr)\biggr)^{-1}\\
&\hspace{0.15\textwidth}
\times
\sum_{\substack{
f\mid g^{3}-g\\
f:\text{odd}
}}
\frac{\mu(f)}{f^{2}}
\biggl(1-\frac{2(f,g)}{g}+\frac{(f,g)^{2}}{g^{2}}\biggr)
+O(g^{N}).
\end{aligned}
\end{equation}
By \cref{thm:relatively_prime_palindrome:N1:M11:f_product} and its variant
\begin{equation}
\begin{aligned}
&\sum_{\substack{
f\mid g^{3}-g\\
f:\text{odd}
}}
\frac{\mu(f)}{f^{2}}
\biggl(1-\frac{2(f,g)}{g}+\frac{(f,g)^{2}}{g^{2}}\biggr)\\
&=
\prod_{\substack{
p\mid g^{3}-g\\
p>2
}}\biggl(1-\frac{1}{p^{2}}\biggr)
-
\frac{2}{g}
\prod_{\substack{
p\mid g\\
p>2
}}
\biggl(1-\frac{1}{p}\biggr)
\prod_{\substack{
p\mid g^{2}-1\\
p>2
}}
\biggl(1-\frac{1}{p^{2}}\biggr)
+
\frac{1}{g^{2}}
\prod_{\substack{
p\mid g\\
p>2
}}
(1-1)
\prod_{\substack{
p\mid g^{2}-1\\
p>2
}}
\biggl(1-\frac{1}{p^{2}}\biggr)\\
&=
\frac{4}{3}
\prod_{p\mid g^{3}-g}\biggl(1-\frac{1}{p^{2}}\biggr)
\biggl(
1-\frac{2}{g}\prod_{p\mid g}\biggl(\frac{p}{p+1}\biggr)
\biggr),
\end{aligned}
\end{equation}
we can calculate \cref{thm:relatively_prime_palindrome:N1:M11:after_inner_sum_odd} as
\begin{equation}
\label{thm:relatively_prime_palindrome:N1:M11:g_odd}
M_{11}
=
\frac{1}{\zeta(2)}
\frac{g+\frac{1}{3}}{g-1}
\biggl(
1-\frac{2}{g}\prod_{p\mid g}\biggl(\frac{p}{p+1}\biggr)
\biggr)
\#\Pi(2N+1)^{2}
+
O(g^{N}).
\end{equation}
By recalling \cref{thm:relatively_prime_palindrome:rho_def}
and inserting
\cref{thm:relatively_prime_palindrome:N1:M11:g_even}
and \cref{thm:relatively_prime_palindrome:N1:M11:g_odd}
into \cref{thm:relatively_prime_palindrome:N1:M1:after_inner_sum},
we have
\begin{equation}
\label{thm:relatively_prime_palindrome:N1:M1}
M_{1}
=
\frac{\rho(g)}{\zeta(2)}
\biggl(
1-\frac{2}{g}\prod_{p\mid g}\biggl(\frac{p}{p+1}\biggr)
\biggr)
\#\Pi(2N+1)^{2}
+
O(g^{2N}U^{-1}+g^{N}).
\end{equation}
For $E_{1}$, since $U=g^{(\frac{4}{5}-\frac{3}{5})N}$,
\cref{lem:Tuxanidy_Panario_BV_variant}, we have
\begin{equation}
\label{thm:relatively_prime_palindrome:N1:E1}
E_{1}
\ll
g^{N}
\sum_{\substack{
m\le U\\
(m,g^{3}-g)=1
}}
m^{-\frac{1}{2}}
\max_{a,k\in\mathbb{Z}}
\Bigl|
\#\Pi_{k,g}(2N+1;a,m)
-
\frac{1}{m}
\#\Pi_{k,g}(2N+1)
\Bigr|
\ll
g^{2N-c\sqrt{N}}.
\end{equation}
On inserting \cref{thm:relatively_prime_palindrome:N1:M1}
and \cref{thm:relatively_prime_palindrome:N1:E1}
into \cref{thm:relatively_prime_palindrome:N1:M1_E1}
with recalling $U=g^{\frac{1}{5}N}$,
we arrive at
\begin{equation}
\label{thm:relatively_prime_palindrome:N1}
\mathscr{N}_{1}
=
\frac{\rho(g)}{\zeta(2)}
\biggl(
1-\frac{2}{g}\prod_{p\mid g}\biggl(\frac{p}{p+1}\biggr)
\biggr)
\#\Pi(2N+1)^{2}
+
O(g^{2N-c\sqrt{N}}).
\end{equation}

%%%%%
For the sum $\mathscr{N}_{2}$, we apply \cref{lem:BS_Brun_Titchmarsh}
to only one factor $\#\Pi(2N+1;0,d)$ to get
\begin{align}
\mathscr{N}_{2}
\ll
g^{N}U^{-\frac{1}{2}}
\sum_{U<d<g^{2N+1}}
\sum_{\substack{
n\in\Pi(2N+1)\\
d\mid n
}}
1
=
g^{N}U^{-\frac{1}{2}}
\sum_{n\in\Pi(2N+1)}
\sum_{\substack{
d\mid n\\
U<d<g^{2N+1}
}}
1
\le
g^{N}U^{-\frac{1}{2}}
\sum_{n\in\Pi(2N+1)}
\tau(n).
\end{align}
By using the bound $\tau(n)\ll_{\epsilon}n^{\epsilon}$
and \cref{lem:Pi_count}, we have
\begin{equation}
\label{thm:relatively_prime_palindrome:N2}
\mathscr{N}_{2}
\ll_{\epsilon}
g^{(2+\epsilon)N}U^{-\frac{1}{2}}
\ll
g^{2N-c\sqrt{N}}
\end{equation}
by taking $\epsilon=\frac{1}{20}$ since $U=g^{\frac{1}{5}N}$.

%%%%%
On inserting \cref{thm:relatively_prime_palindrome:N1} and \cref{thm:relatively_prime_palindrome:N2}
into \cref{thm:relatively_prime_palindrome:N_N1_N2}, we get
\begin{align}
\mathscr{N}
=
\frac{\rho(g)}{\zeta(2)}
\biggl(
1-\frac{2}{g}\prod_{p\mid g}\biggl(\frac{p}{p+1}\biggr)
\biggr)
\#\Pi(2N+1)^{2}
+
O(g^{2N-c\sqrt{N}}).
\end{align}
On recalling \cref{lem:Pi_count},
we obtain the theorem.
\end{proof}

%%%%%%%%%%%%%%%%%%%%%%%%%%%%%%%%%%%%%%%%
\begin{proof}[Proof of \cref{thm:relatively_prime_palindrome:g3_g}.]
We use the same argument as the proof of \cref{thm:relatively_prime_palindrome}.
Let us write
\[
\mathscr{N}^{\ast}
\coloneqq
\sum_{\substack{
m,n\in\mathscr{P}^{\ast}(x)\\
(m,n)=1
}}
1.
\]
By using the M\"obius function, we have
\begin{align}
\mathscr{N}^{\ast}
=
\sum_{m,n\in\mathscr{P}^{\ast}(x)}
\sum_{d\mid(m,n)}
\mu(d)
=
\sum_{\substack{
d<x\\
(d,g^{3}-g)=1
}}
\mu(d)
\#\mathscr{P}^{\ast}(x;0,d)^{2}
\end{align}
since the elements of $\mathscr{P}^{\ast}(x)$
are all coprime to $g^{3}-g$.
Let $U\coloneqq x^{\frac{1}{10}}$ and dissect the above sum as
\begin{equation}
\label{thm:relatively_prime_palindrome:g3_g:N_N1_N2}
\mathscr{N}^{\ast}
=
\sum_{\substack{
d\le U\\
(d,g^{3}-g)=1
}}
\mu(d)
\#\mathscr{P}^{\ast}(x;0,d)^{2}
+
\sum_{\substack{
U<d\le x\\
(d,g^{3}-g)=1
}}
\mu(d)
\#\mathscr{P}^{\ast}(x;0,d)^{2}
\eqqcolon
\mathscr{N}_{1}^{\ast}+\mathscr{N}_{2}^{\ast}.
\end{equation}
We then calculate $\mathscr{N}_{1}^{\ast}$ and $\mathscr{N}_{2}^{\ast}$ separately.

%%%%%
For the sum $\mathscr{N}_{1}^{\ast}$, we have
\begin{align}
\#\mathscr{P}^{\ast}(x;0,d)^{2}
-
\biggl(\frac{1}{d}\#\mathscr{P}^{\ast}(x)\biggr)^{2}
&=
\biggl(
\#\mathscr{P}^{\ast}(x;0,d)
-
\frac{1}{d}
\#\mathscr{P}^{\ast}(x)
\biggr)
\biggl(
\#\mathscr{P}^{\ast}(x;0,d)
+
\frac{1}{d}
\#\mathscr{P}^{\ast}(x)
\biggr)\\
&\ll
\#\mathscr{P}^{\ast}(x)
\biggl|
\#\mathscr{P}^{\ast}(x;0,d)
-
\frac{1}{d}
\#\mathscr{P}^{\ast}(x)
\biggr|.
\end{align}
We thus have
\begin{equation}
\label{thm:relatively_prime_palindrome:g3_g:N1:M1_E1}
\mathscr{N}_{1}^{\ast}
=
M_{1}^{\ast}
+
O(E_{1}^{\ast}),
\end{equation}
where
\[
M_{1}^{\ast}
\coloneqq
\#\mathscr{P}^{\ast}(x)^{2}
\sum_{\substack{
d\le U\\
(d,g^{3}-g)=1
}}
\frac{\mu(d)}{d^{2}}
\and
E_{1}^{\ast}
\coloneqq
\#\mathscr{P}^{\ast}(x)
\sum_{\substack{
d\le U\\
(d,g^{3}-g)=1
}}
\biggl|
\#\mathscr{P}^{\ast}(x;0,d)
-
\frac{1}{d}
\#\mathscr{P}^{\ast}(x)
\biggr|.
\]
We clearly have
\begin{equation}
\label{thm:relatively_prime_palindrome:g3_g:N1:M1}
M_{1}^{\ast}
=
\frac{1}{\zeta(2)}
\biggl(\prod_{p\mid g^{3}-g}\biggl(1-\frac{1}{p^{2}}\biggr)\biggr)^{-1}
\#\mathscr{P}^{\ast}(x)^{2}
+
O(\#\mathscr{P}^{\ast}(x)^{2}U^{-1}).
\end{equation}
Also, since $U=x^{\frac{1}{5}-\frac{1}{10}}$, \cref{lem:TuxanidyPanario_BV} implies
\begin{equation}
\label{thm:relatively_prime_palindrome:g3_g:N1:E1}
E_{1}^{\ast}
\ll
\#\mathscr{P}^{\ast}(x)^{2}
e^{-c\sqrt{\log x}}
\end{equation}
with some $c>0$. On inserting \cref{thm:relatively_prime_palindrome:g3_g:N1:M1}
and \cref{thm:relatively_prime_palindrome:g3_g:N1:E1} into
\cref{thm:relatively_prime_palindrome:g3_g:N1:M1_E1}, we arrive at
\begin{equation}
\label{thm:relatively_prime_palindrome:g3_g:N1}
\mathscr{N}_{1}^{\ast}
=
\frac{1}{\zeta(2)}
\biggl(\prod_{p\mid g^{3}-g}\biggl(1-\frac{1}{p^{2}}\biggr)\biggr)^{-1}
\#\mathscr{P}^{\ast}(x)^{2}
+
O(
\#\mathscr{P}^{\ast}(x)^{2}e^{-c\sqrt{\log x}}
)
\end{equation}
since $U=x^{\frac{1}{10}}$.

%%%%%
We next estimate $\mathscr{N}_{2}^{\ast}$.
By dissecting dyadically and using \cref{lem:BS_Brun_Titchmarsh},
we get
\[
\#\mathscr{P}^{\ast}(x;0,d)
\ll
\sum_{\substack{
N\ge 1\\
g^{N-1}\le x
}}
\#\Pi(N;0,d)
\ll
d^{-\frac{1}{2}}
\sum_{\substack{
N\ge 1\\
g^{N-1}\le x
}}
g^{\frac{N}{2}}
\ll
x^{\frac{1}{2}}d^{-\frac{1}{2}}.
\]
We apply this bound to only one factor $\#\mathscr{P}^{\ast}(x;0,d)$
and swapping the sum to get
\begin{align}
\mathscr{N}_{2}^{\ast}
\ll
x^{\frac{1}{2}}U^{-\frac{1}{2}}
\sum_{U<d<x}
\sum_{\substack{
n\in\mathscr{P}^{\ast}(x)\\
d\mid n
}}
1
\le
x^{\frac{1}{2}}U^{-\frac{1}{2}}
\sum_{n\in\mathscr{P}^{\ast}(x)}
\tau(n).
\end{align}
By using the bound $\tau(n)\ll_{\epsilon}n^{\epsilon}$
and \cref{lem:P_ast_size}, we have
\begin{equation}
\label{thm:relatively_prime_palindrome:g3_g:N2}
\mathscr{N}_{2}^{\ast}
\ll_{\epsilon}
x^{1+\epsilon}U^{-\frac{1}{2}}
\ll
x^{\frac{39}{40}}
\end{equation}
by taking $\epsilon=\frac{1}{40}$ since $U=x^{\frac{1}{10}}$.

%%%%%
On inserting \cref{thm:relatively_prime_palindrome:g3_g:N1} and \cref{thm:relatively_prime_palindrome:g3_g:N2}
into \cref{thm:relatively_prime_palindrome:g3_g:N_N1_N2}, we get
\begin{align}
\mathscr{N}^{\ast}
=
\frac{1}{\zeta(2)}
\biggl(\prod_{p\mid g^{3}-g}\biggl(1-\frac{1}{p^{2}}\biggr)\biggr)^{-1}
\#\mathscr{P}^{\ast}(x)^{2}
+
O(
\#\mathscr{P}^{\ast}(x)^{2}e^{-c\sqrt{\log x}}
+
x^{\frac{39}{40}}
).
\end{align}
On recalling \cref{lem:P_ast_size}, we obtain the theorem.
\end{proof}

%%%%%%%%%%%%%%%%%%%%%%%%%%%%%%%%%%%%%%%%
\ifendnotes
\newpage
\begingroup
\parindent 0pt
\parskip 2ex
\def\enotesize{\normalsize}
\theendnotes
\endgroup
\fi

%%%%%%%%%%%%%%%%%%%%%%%%%%%%%%%%%%%%%%%%
\bibliographystyle{amsplain}
\bibliography{RelativelyPrimePalindrome}
\bigskip

\begin{flushleft}
{\textsc{%
\small
Hirotaka Kobayashi\\[.3em]
\footnotesize
Graduate School of Mathematics, Nagoya University,\\
Furocho, Chikusa-ku, 464-8602 Nagoya, Japan.
}

\small
\textit{Email address}: \texttt{m17011z@math.nagoya-u.ac.jp}
}
\bigskip

{\textsc{%
\small
Yuta Suzuki\\[.3em]
\footnotesize
Department of Mathematics, Rikkyo University,\\
3-34-1 Nishi-Ikebukuro, Toshima-ku, Tokyo 171-8501, Japan.
}

\small
\textit{Email address}: \texttt{suzuyu@rikkyo.ac.jp}
}
\bigskip

{\textsc{%
\small
Ryota Umezawa\\[.3em]
\footnotesize
Graduate School of Mathematics, Nagoya University,\\
Furocho, Chikusa-ku, 464-8602 Nagoya, Japan.
}

\small
\textit{Email address}: \texttt{m15016w@math.nagoya-u.ac.jp}
}
\end{flushleft}

\end{document}